\documentclass[11pt,a4paper]{article}
\usepackage[T2A,T1]{fontenc}
\usepackage[utf8]{inputenc}
\usepackage{lmodern}
\DeclareFontFamily{T2A}{lmr}{}
\DeclareFontShape{T2A}{lmr}{m}{n}{<->ssub * cmr/m/n}{}
\usepackage[russian,english]{babel}
\usepackage[margin=25mm]{geometry}
\usepackage{amsmath,amssymb,amsthm}
\usepackage{graphicx,array,longtable}
\usepackage{algorithm,algpseudocode}
\usepackage{framed,xcolor,url}
\usepackage[hidelinks]{hyperref}
\allowdisplaybreaks
\newcommand{\R}{\mathbb{R}}
\newcommand{\Z}{\mathcal{Z}}
\newcommand{\X}{\mathcal{X}}
\newcommand{\Y}{\mathcal{Y}}
\newcommand{\norm}[1]{\left\lVert #1\right\rVert}
\newcommand{\ip}[2]{\left\langle #1,#2\right\rangle}
\newcommand{\op}{\mathrm{op}}
\newcommand{\Gap}{\mathop{\rm Gap}}

\algnewcommand{\Inputs}[1]{%
  \State \textbf{Inputs:}
  \Statex \hspace*{\algorithmicindent}\parbox[t]{.8\linewidth}{\raggedright #1}
}
\algnewcommand{\Initialize}[1]{%
  \State \textbf{Initialize:}
  \Statex \hspace*{\algorithmicindent}\parbox[t]{.8\linewidth}{\raggedright #1}
}
\algdef{SE}[DOWHILE]{Do}{doWhile}{\algorithmicdo}[1]{\algorithmicwhile\ #1}

\theoremstyle{plain}
\newtheorem{teo}{Theorem}
\theoremstyle{plain}

\theoremstyle{plain}
\newtheorem{pro}{Proposition}
\theoremstyle{plain}

\theoremstyle{plain}

\theoremstyle{plain}

\theoremstyle{plain}
\newtheorem{cor}{Corollary}
\theoremstyle{plain}

\theoremstyle{plain}

\theoremstyle{plain}

\theoremstyle{plain}

\theoremstyle{plain}

\theoremstyle{plain}

\theoremstyle{plain}

\theoremstyle{plain}

\theoremstyle{remark}

\theoremstyle{remark}

\theoremstyle{remark}

\theoremstyle{plain}

\theoremstyle{plain}

\theoremstyle{remark}

\title{Certified Residual Quasi-Newton Methods for Distributed Variational Inequalities}
\author{\begin{tabular}{c}
Ewsey R. Obzherin$^{1,2}$, Roman M. Mozhaev$^1$, Alexander V. Gasnikov$^{3,1}$,\\
Martin Tak\'a\v{c}$^4$, Artem A. Agafonov$^{4,1}$, Dmitry I. Kamzolov$^5$
\end{tabular}\\[1ex]
\small $^1$Moscow Institute of Physics and Technology\\
\small $^2$FusionBrain Lab\\
\small $^3$Innopolis University\\
\small $^4$Mohamed bin Zayed University of Artificial Intelligence\\
\small $^5$Independent Researcher\\[1ex]
\small\texttt{obzherin.ewsey@ya.ru}, \texttt{ceo@mozhaevr.ru}\\
\small\texttt{gasnikov@yandex.ru}, \texttt{martin.takac@mbzuai.ac.ae}\\
\small\texttt{agafonov.artem98@gmail.com}, \texttt{kamzolov.opt@gmail.com}}
\date{}
\begin{document}
\maketitle
\label{article_begin}
\begin{abstract}
Second-order methods for smooth monotone variational inequalities reach the
optimal rate $O(T^{-3/2})$, but a distributed exact Jacobian costs $d$ times
more communication than an operator value.  We show that similarity does part
of the work for free: if the server's Jacobian differs from the global one by
at most $\beta$, using it gives $O(L_1D^3T^{-3/2}+\beta D^2T^{-1})$ at
first-order communication cost.  A quasi-Newton approximation of the residual
Jacobian $\nabla F-\nabla F_1$, built from secants already communicated,
improves the model but cannot remove the $T^{-1}$ term, because any uniform
bound on the Jacobian error leaves it in the rate.  We therefore certify the
surrogate only along the candidate step: one Jacobian--vector product tests it,
and a failed test is reused as an exact correction.  This attains the exact
rate $O(L_1D^3T^{-3/2})$ while transmitting only vectors.  Experiments on
LIBSVM and synthetic instances measure accuracy against communication.
\end{abstract}

\begingroup\sloppy\noindent\textbf{Keywords:} variational inequality, distributed optimization, second-order similarity, quasi-Newton method, Jacobian--vector product, communication complexity\par\endgroup

\noindent\textbf{MSC:} 65K15, 90C33, 49M15, 90C53, 68W15

\begingroup\small\noindent The research was supported by Russian Science Foundation (project No. 23-11-00229-\foreignlanguage{russian}{П}), \url{https://rscf.ru/en/project/23-11-00229/}.\par\endgroup

{
\extrarowheight=-2pt

\section{Introduction}
\label{sec:introduction}

Many problems in engineering, economics, and machine learning ask for an
\emph{equilibrium} rather than for a minimum: several agents act at once,
and we look for a state in which none of them can improve alone.
Variational inequalities (VIs) are
the standard language for questions of this type~\cite{facchinei2003,kinderlehrer1980}.
They describe traffic equilibria, where every driver picks the fastest
route and the choices of the others change what ``fastest'' means~\cite{dafermos1980};
market models and Nash equilibria, where each firm reacts to the decisions of
its competitors~\cite{facchinei2007}; and power control in wireless
networks~\cite{scutari2010}.  The same form appears in machine learning, for
example in adversarial training~\cite{madry2018} and in generative adversarial
networks~\cite{goodfellow2014}.  VIs also have a long history in control theory.
A linear system closed by a sector-bounded feedback nonlinearity has a
monotone nonlinearity, so the steady states of the loop solve a monotone VI,
and stability is checked by frequency conditions going back to
Yakubovich~\cite{yakubovich1962}.
Monotonicity is still the main tool in
modern control work on equilibrium seeking over
networks~\cite{tatarenko2021}. Because so many different models share one
structure, a fast and reliable VI solver is useful far beyond a single
application.

Throughout the paper $\Z\subset\R^d$ is a nonempty compact convex set and
$F:\Z\to\R^d$ is a continuous operator.  We call $F$ monotone if
\begin{equation}
    \label{eq:monotone}
  \ip{F(x)-F(y)}{x-y}\ge0
  \qquad\text{for all }x,y\in\Z,
\end{equation}
and $\mu$-strongly monotone if the left-hand side is at least
$\mu\norm{x-y}^2$ for some $\mu>0$.  Monotonicity plays here the role
of convexity: when $F=\nabla f$, the two conditions above
say exactly that $f$ is convex, respectively $\mu$-strongly convex.
The variational inequality problem is
\begin{equation}
  \text{find } z^\star\in\Z
  \quad\text{such that}\quad
  \ip{F(z^\star)}{z-z^\star}\ge0
  \quad\text{for every }z\in\Z.
  \label{eq:vi}
\end{equation}
Geometrically, $-F(z^\star)$ points out of the
set $\Z$, so no feasible direction improves the situation; if $z^\star$ is
interior, \eqref{eq:vi} just means $F(z^\star)=0$.

A solution is almost never reached exactly, so we measure the quality of a point $\widehat z\in\Z$ by the Minty gap
\begin{equation}
  \Gap(\widehat z):=\sup_{z\in\Z}\ip{F(z)}{\widehat z-z}.
  \label{eq:gap}
\end{equation}
For monotone $F$ this quantity is nonnegative on $\Z$, and it equals zero
exactly at a solution of~\eqref{eq:vi}.  All rates in this paper
are stated for the gap.  We write $D:=\sup_{x,y\in\Z}\norm{x-y}$ for
the diameter of $\Z$. $L_0$ for the Lipschitz constant of $F$: $\norm{F(x)-F(y)}\le L_0\norm{x-y}$, and $L_1$ for
the Lipschitz constant of its Jacobian,
\begin{equation}
    \label{eq:jac-lip}
  \norm{\nabla F(x)-\nabla F(y)}_{\op}\le L_1\norm{x-y},
  \qquad x,y\in\Z.
\end{equation}
The first drives the rates of first-order methods, the second those of
second-order methods.

Three standard problems are special cases of~\eqref{eq:vi}.  For
$F=\nabla f$ with $f$ convex and differentiable, \eqref{eq:vi} is the
first-order optimality condition of $\min_{z\in\Z}f(z)$.  For $z=(x,y)$,
$\Z=\X\times\Y$ and
$F(z)=\bigl(\nabla_x\phi(x,y),\,-\nabla_y\phi(x,y)\bigr)$, its solutions are
the saddle points of $\min_{x\in\X}\max_{y\in\Y}\phi(x,y)$, and $F$ is monotone
exactly when $\phi$ is convex--concave; the minus sign makes $F$ a non-gradient
field, so $\nabla F$ is in general not symmetric and BFGS-type formulas, which
produce symmetric matrices, do not apply directly.  For $\Z=\R^d$,
\eqref{eq:vi} reduces to the nonlinear system $F(z^\star)=0$, where $D$ is
infinite and the gap must be replaced by a residual such as
$\norm{F(\widehat z)}$.

\noindent The classical
first-order methods for VIs are Extragradient~\cite{korpelevich1976}, the optimistic (past
extragradient) method~\cite{popov1980}, Mirror--Prox~\cite{nemirovski2004}, and
dual extrapolation~\cite{nesterov2007}.  They use only values of $F$, and they
reach the ergodic rate $\Gap(\widetilde z_T)=O(L_0 D^2T^{-1})$, which cannot be
improved by any method that uses first-order information only.  Curvature
information gives a better rate.  The Newton proximal extragradient method of
Monteiro and Svaiter~\cite{monteiro2012} uses the Jacobian $\nabla F$ and
reaches $O(L_1D^3 T^{-3/2})$, and methods that use $p$ derivatives reach
$O(T^{-(p+1)/2})$, which is also optimal for that
class~\cite{bullins2022,adil2022,lin2025,alves2024}.  The price is
that every step needs a full Jacobian (or a higher-order tensor) and an inner
regularized subproblem.  Quasi-Newton (QN) methods try to avoid this price:
they build an approximation of the Jacobian from operator values
only~\cite{broyden1965,dennismore1977}, and recent work extends them to
saddle-point problems~\cite{liu2022}.  Agafonov et al.~\cite{agafonov2024} make the
trade-off explicit for VIs: they measure how much Jacobian error a second-order
method can tolerate, give an optimal method under this error, and use Broyden-type approximations inside it.

In many modern applications the data are not stored on one machine.  This leads
to the distributed (finite-sum) operator
\begin{equation}
  F(z)=\sum_{i=1}^{M}w_iF_i(z),
  \qquad w_i>0,
  \qquad \sum_{i=1}^{M}w_i=1,
  \label{eq:finite-sum}
\end{equation}
where node $i$ can evaluate only its own $F_i$, and the server (which we also
call node~$1$) collects the results.  Here local computation is usually cheap
and communication is expensive, so the right question is not only how many
iterations a method needs, but also how much information must cross the
network.  For first-order methods this question is well understood: there are
lower bounds and matching optimal decentralized algorithms~\cite{kovalev2022},
together with practical schemes that use compression and local
steps~\cite{beznosikov2023}. For second-order methods much less is known, and
the existing Newton-type distributed algorithms were designed mainly for
minimization~\cite{daneshmand2021,islamov2021,safaryan2022}; moreover, one
exact global Jacobian costs $O(Md^2)$ transmitted scalars, that is, $d$ times
more than one global operator value.

This gives a trade-off.  We are willing to compute more on the workers,
because that buys faster convergence, but we are not willing to send a full
Jacobian, because communication is the expensive resource.  Two ideas make
second-order VI methods possible under this constraint.  The first is
quasi-Newton updates, which replace the Jacobian by a matrix assembled from
operator values.  The second is similarity: the operator stored at the server
may already be close to the global one, so its Jacobian is a free model of the
global Jacobian.  Both ideas are well developed, but not where we need them.
Distributed quasi-Newton methods with non-asymptotic guarantees exist for
minimization~\cite{du2024,agafonov2025flecs} but not for monotone VIs.  Similarity has
been used for VIs, but together with first-order
methods~\cite{beznosikov2021,beznosikov2023,zhou2024}, so the curvature it
carries is left unused.

For minimization, similarity is usually written as
$\norm{\nabla^2f_i(z)-\nabla^2f(z)}_{\op}\le\delta$, where $\delta$ becomes
small when each node's data is statistically close to each other and to the average~\cite{arjevani2015,shamir2014,hendrikx2020}.
Under this assumption the local model
already carries most of the global curvature, so a method can communicate much
less.  It transfers to VIs without loss: for the canonical saddle operator
\[
  F_i(z)=
  \begin{pmatrix}
    \nabla_x\phi_i(x,y)\\
    -\nabla_y\phi_i(x,y)
  \end{pmatrix}
  =D_{\rm sp}\nabla\phi_i(z),
  \qquad
  D_{\rm sp}:=\operatorname{diag}(I,-I),
\]
we have $\nabla F_i(z)=D_{\rm sp}\nabla^2\phi_i(z)$, and $D_{\rm sp}$ is
orthogonal, so Hessian similarity and Jacobian similarity are the same quantity
in operator norm.  We therefore work directly with the Jacobian similarity
parameter
\begin{equation}
  \beta:=\sup_{z\in\Z}
  \norm{\nabla F(z)-\nabla F_1(z)}_{\op}.
  \label{eq:beta}
\end{equation}
If $\beta$ is small, the Jacobian $\nabla F_1$, which the server can compute
without any communication, is already a good model of the global Jacobian; if
$\beta$ is large, this local model is biased in a systematic way.

Bringing the two ideas together in a distributed VI raises three
questions:

\begin{center}
\begin{minipage}{0.92\linewidth}
\centering
\itshape
Can similarity alone communicate less than first-order methods for VIs?

Can a distributed QN method for VIs outperform
first-order methods?

Can distributed QN reach the rate of an exact second-order method cheaper than sending a full Jacobian?
\end{minipage}
\end{center}

\noindent
We answer them in turn.

\paragraph*{Main Contributions.}
\begin{itemize}
\item \textbf{Similarity alone already helps.}  If the server uses only its
  own Jacobian $\nabla F_1$, the method converges as
  $O\!\left(L_1D^3T^{-3/2}+\beta D^2T^{-1}\right)$ and sends nothing beyond what
  a first-order method sends.  The slow term matches the optimal first-order rate $O(L_0D^2T^{-1})$ in its order in
  $T$, but carries $\beta$ instead of $L_0$.  Since $\beta\le2L_0$, the method
  is never slower up to a constant, and it is much faster when the nodes hold
  similar data, $\beta\ll L_0$.

\item \textbf{Quasi-Newton helps further, but not enough on its own.}  We
  approximate only the difference $\nabla F-\nabla F_1$, using secants the
  method has already communicated, and combine it with the exact server
  Jacobian.  This lowers the model error in practice but does not remove the $T^{-1}$ term: any uniform
  bound on the Jacobian error, however small, leaves such a term in the rate.

\item \textbf{A certificate closes the gap.}  Instead of asking the model
  to be accurate as a matrix, we ask it to be accurate only along the step we
  actually take.  One Jacobian--vector product tests this; if the test fails,
  the same product is reused to correct the model in that direction, and the
  step is recomputed.  The method then attains
  the exact second-order rate
  $O(L_1D^3T^{-3/2})$ while only vectors of length $d$ cross the network.  The
  number of extra products is governed by the quality of the current
  approximation and grows with the dimension only in the worst case.

\item \textbf{Experiments.}  On real and synthetic problems we compare both
  versions with an exact-Jacobian method and with first-order baselines.
\end{itemize}

\textbf{Organization.} Section~\ref{sec:setup} fixes the problem, the
communication model, and the residual operator.
Section~\ref{sec:viji} recalls the inexact-Jacobian theory we build on
and separates its two accuracy requirements.  The three questions are
then answered in order: Section~\ref{sec:similarity} treats similarity
alone, Section~\ref{sec:qn} adds the residual quasi-Newton
approximation, and Section~\ref{sec:method} adds the certificate and
proves the exact rate together with the communication bounds.
Section~\ref{sec:experiments} reports the numerical study.  Proofs are
in the appendices.

\section{Problem Formulation and Communication Model}
\label{sec:setup}

We solve the variational inequality~\eqref{eq:vi} on a nonempty compact convex
set $\Z\subset\R^d$ of diameter $D$, with a continuously differentiable operator
$F$ that is monotone in the sense of~\eqref{eq:monotone} and has an
$L_1$-Lipschitz Jacobian in the sense of~\eqref{eq:jac-lip}.  The output of an
algorithm is judged by the gap~\eqref{eq:gap}.  The operator has the
finite-sum form~\eqref{eq:finite-sum}, and the local--global Jacobian similarity
is measured by $\beta$ from~\eqref{eq:beta}.

\subsection{Communication model}

The system has $M$ nodes and a server, and the server is also worker~$1$.  It
can therefore evaluate $F_1$, the local Jacobian $\nabla F_1$, and local
Jacobian--vector products without any network exchange.  Everything that
involves the average~\eqref{eq:finite-sum} needs the other nodes.  We assume
full participation: every node answers every request.  Local computation is treated as cheap and communication as expensive, as in federated and cluster settings, so we ask not only how many
iterations a method needs but how mane scalars have to cross the network.

Three global quantities appear in second-order methods, and each is one
collective synchronization, so they differ in volume and not in rounds.
An \emph{operator value} has each node send $w_iF_i(v)$, a vector of
length $d$.  A \emph{Jacobian--vector product} has the server broadcast $s$ and
each node return $w_i\nabla F_i(v)s$, again a vector of length $d$; crucially,
no node builds its Jacobian, since $\nabla F_i(v)s$ is a directional derivative
costing about one operator evaluation.  A \emph{full Jacobian} has each node
send the $d^2$ entries of $w_i\nabla F_i(v)$.  The first two therefore cost
$Md$ transmitted scalars and the third costs $Md^2$.
The gap of size $d$ between the Jacobian and JVP is the reason for the whole construction of this paper.  

\subsection{Complexity measures}

We count the following quantities separately, because a system may be limited
by any one of them:
\begin{itemize}
\item $N_{\rm sync}$: number of communication rounds;
\item $N_{\rm scal}$: communication volume, measured by the number of
  communicated scalars;
\item $N_F$: number of global operator evaluations;
\item $N_{\rm JVP}$: number of global Jacobian--vector products;
\item $N_J$: number of global Jacobian evaluations.
\end{itemize}
Keeping $N_{\rm sync}$ and
$N_{\rm scal}$ apart matters here: a network with a high fixed cost per
round and plenty of bandwidth would judge the two methods differently from a
network with the opposite profile.

\subsection{The residual operator}

The server already knows $F_1$, so the only unknown part of the problem is the
difference between the average and the server's own operator.  We call it the
residual operator,
\begin{equation}
  G(z):=F(z)-F_1(z),
  \label{eq:G}
\end{equation}
and write its Jacobian at an outer point $v_k$ as
\begin{equation}
  R_k:=\nabla G(v_k)=\nabla F(v_k)-\nabla F_1(v_k).
  \label{eq:Rk}
\end{equation}
This is the object our quasi-Newton matrices approximate.  Approximating $G$
rather than $F$ is what makes similarity useful: by~\eqref{eq:beta} the
quantity we have to learn is small when the nodes hold similar data, whereas
$\nabla F$ itself is not small in any regime.

Since $\norm{\nabla G(z)}_{\op}\le\beta$ on $\Z$ by~\eqref{eq:beta}, the
residual operator is $\beta$-Lipschitz,
\begin{equation}
  \norm{G(x)-G(y)}\le\beta\norm{x-y},
  \qquad x,y\in\Z.
  \label{eq:G-lip}
\end{equation}
The residual therefore
changes slowly, which is why secants collected earlier stay informative.

For the error bounds based on stored secants we use one additional assumption,
which is not needed for the basic certified convergence theorem:
\begin{equation}
  \norm{\nabla G(x)-\nabla G(y)}_{\op}
  \le L_G\norm{x-y},
  \qquad x,y\in\Z.
  \label{eq:residual-jac-lip}
\end{equation}
This holds whenever both $\nabla F$ and $\nabla F_1$ are
Lipschitz; it only quantifies how fast an old secant
becomes a poor description of $R_k$.

Finally, a residual secant costs nothing extra: $y=G(b)-G(a)$ combines two
global operator values the outer method has already requested with two local
evaluations at the server, so the quasi-Newton model needs no communication of
its own.

\section{Inexact Jacobians: two kinds of accuracy}
\label{sec:viji}

All three methods below replace the exact Jacobian by a surrogate $J_k$
and differ only in that surrogate, so they share one convergence theory, the
inexact-Jacobian analysis of Agafonov et al.~\cite{agafonov2024}, which we call
VIJI.  Its key feature is that it contains two accuracy requirements that do
very different things.

\begin{teo}[{\cite[Theorems 3.2 and 3.3]{agafonov2024}}]
\label{thm:known-uniform}
Suppose that the Jacobian surrogate $J_k$ satisfies the uniform bound
\begin{equation}
  \norm{\nabla F(v_k)-J_k}_{\op}\le\delta
  \label{eq:uniform-condition}
\end{equation}
at every outer iteration, and that each regularized inner VI is solved to the
required inner certificate.  With the parameter choice of
\cite[Theorem~3.2]{agafonov2024}, where the adaptive dual-step parameter is set
to the same bound $\delta$,
\begin{equation}
  \Gap(\widetilde z_T)
  \le
  \frac{16\sqrt2\,L_1D^3}{T^{3/2}}
  +\frac{16\sqrt2\,\delta D^2}{T}.
  \label{eq:known-rate}
\end{equation}
If, in addition, every accepted step $s_k$ satisfies the directional condition
\begin{equation}
  \norm{(\nabla F(v_k)-J_k)s_k}
  \le\frac{L_1}{2}\norm{s_k}^2,
  \label{eq:directional-condition}
\end{equation}
and the outer dual-step parameter is chosen as in
\cite[Theorem~3.3]{agafonov2024}, namely $\rho_k:=\tfrac{L_1}{2}\norm{s_k}$
(their $\beta_k$, renamed to avoid a clash with the similarity radius
$\beta$), then
\begin{equation}
  \Gap(\widetilde z_T)
  \le\frac{32L_1D^3}{T^{3/2}}.
  \label{eq:known-exact-rate}
\end{equation}
\end{teo}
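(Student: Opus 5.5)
This theorem is imported from \cite{agafonov2024}, so the plan is to reconstruct the VIJI argument in the notation of Section~\ref{sec:setup} and check that the constants survive.

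\textbf{Outer template.} The outer method is a Monteiro--Svaiter-type proximal extragradient scheme. At the outer point $v_k$ the model operator is
\[
\Phi_k(z)=F(v_k)+J_k(z-v_k)+\tfrac{L_1}{2}\norm{z-v_k}(z-v_k)+\rho_k(z-v_k).
\]
The inner regularized VI with this operator is solved approximately to produce the candidate $z_{k+1}=v_k+s_k$. After that an extragradient-type dual step with step size $\lambda_k\propto 1/(L_1\norm{s_k}+\rho_k)$ is taken.

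\textbf{One-step estimate.} The first step is a per-iteration inequality. For every $z\in\Z$,
\[
\lambda_k\ip{F(z_{k+1})}{z_{k+1}-z}\le \tfrac12\norm{v_k-z}^2-\tfrac12\norm{v_{k+1}-z}^2-c\,\lambda_k(L_1\norm{s_k}+\rho_k)\norm{s_k}^2 .
\]
To obtain it, I write $F(z_{k+1})=\Phi_k(z_{k+1})+e_k$, where the error is
\[
e_k=\bigl(F(z_{k+1})-F(v_k)-\nabla F(v_k)s_k\bigr)+(\nabla F(v_k)-J_k)s_k-\tfrac{L_1}{2}\norm{s_k}s_k-\rho_k s_k .
\]
The Taylor term is bounded by $\tfrac{L_1}{2}\norm{s_k}^2$ via \eqref{eq:jac-lip}.

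\textbf{Where the two conditions enter.} The two hypotheses differ only in how the Jacobian-error term is absorbed.
\begin{itemize}
\item Under \eqref{eq:uniform-condition} the Jacobian-error term is at most $\delta\norm{s_k}$, and it is absorbed by choosing $\rho_k=\delta$.
\item Under \eqref{eq:directional-condition} it is at most $\tfrac{L_1}{2}\norm{s_k}^2$, and it is absorbed by choosing $\rho_k=\tfrac{L_1}{2}\norm{s_k}$.
\end{itemize}
In both cases the regularization dominates the error, so $\norm{e_k}\le\lambda_k^{-1}\theta\norm{s_k}$ for some $\theta<1$. This is exactly the hybrid proximal extragradient relative-error criterion. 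The inner certificate supplies the same kind of inequality for the residual of the inner VI.

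\textbf{Summation.} Summing over $k$ and averaging with weights $\lambda_k$ gives, by monotonicity \eqref{eq:monotone},
\[
\Gap(\widetilde z_T)\le D^2\Big/\Bigl(2\sum_k\lambda_k\Bigr).
\]
The same telescoping gives the bound $\sum_k \lambda_k(L_1\norm{s_k}+\rho_k)\norm{s_k}^2\lesssim D^2$. The remaining task is to lower-bound $\sum_k\lambda_k$, and this is the step I expect to be the main obstacle.

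\emph{Directional case.} Here $\lambda_k\asymp 1/(L_1\norm{s_k})$, so the telescoped bound reads $\sum_k\norm{s_k}\lesssim D^2$. I then apply H\"older in the form
\[
\Bigl(\sum_k \lambda_k\Bigr)^2\Bigl(\sum_k\lambda_k^{-2}\Bigr)\ge T^3 .
\]
With $\lambda_k^{-2}\asymp L_1^2\norm{s_k}^2$ this requires a bound $\sum_k\norm{s_k}^2\lesssim D^2$. In Monteiro--Svaiter this comes from the reverse weighting $\lambda_k\norm{s_k}\asymp 1/L_1$, i.e.\ $\sum_k\lambda_k^{-2}\le L_1^2D^2$. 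The result is $\sum_k\lambda_k\gtrsim T^{3/2}/(L_1D)$, which gives \eqref{eq:known-exact-rate}.

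\emph{Uniform case.} Here $\lambda_k^{-1}\asymp L_1\norm{s_k}+\delta$. I split the sum $\sum_k\lambda_k^{-1}$ into its $L_1$ part and its $\delta$ part and use $1/(a+b)\ge$ a harmonic-type bound. This yields
\[
\Bigl(\sum_k\lambda_k\Bigr)^{-1}\lesssim \frac{L_1D}{T^{3/2}}+\frac{\delta}{T},
\]
which gives \eqref{eq:known-rate}.

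\textbf{Constants.} The explicit factors $16\sqrt2$ and $32$ I would simply track through these inequalities, or take as stated in \cite{agafonov2024}.
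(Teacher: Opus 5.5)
\textbf{The gap is in the directional case.} Your exact-rate argument rests on $\lambda_k\asymp 1/(L_1\norm{s_k})$. That is compatible with your one-step estimate only because $\Phi_k$ uses $\rho_k$ as its regularization coefficient. Once you put $\rho_k=\tfrac{L_1}{2}\norm{s_k}$, the uniform level $\delta$ has left the inner VI, and \eqref{eq:uniform-condition} plays no role. The paper gives no proof beyond the citation, but Appendix~\ref{app:mapping} fixes how the statement is read, and that reading keeps the two roles apart. The inner VI is still \eqref{eq:app-model}, with certificate \eqref{eq:app-inner} whose tolerance contains $\delta\norm{x-v}^2$; only the dual step changes.

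Your template cannot handle that model. Let $\gamma_k$ be the regularization coefficient of the model in use at $z_{k+1}$; for \eqref{eq:app-model}, $\gamma_k=10\delta+5L_1\norm{s_k}$. Write $F(z_{k+1})=\Omega_k(z_{k+1})+E_k-\gamma_ks_k$, with $E_k$ the Taylor-plus-Jacobian error. Apply the three-point identity to $-\lambda_k\gamma_k\ip{s_k}{z_{k+1}-v_{k+1}}$ and combine it with the $-\tfrac12\norm{v_k-v_{k+1}}^2$ from the prox step. This leaves $\tfrac12(\lambda_k\gamma_k-1)\norm{v_k-v_{k+1}}^2$, so the one-step estimate forces $\lambda_k\gamma_k$ to be bounded by an absolute constant. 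Hence $\lambda_k\lesssim1/\delta$ and $\sum_{k\le T}\lambda_k\lesssim T/\delta$, so $D^2/(2\sum_k\lambda_k)$ never beats $\delta D^2/T$. The certificate fails the same way: with $\lambda_k\asymp1/(L_1\norm{s_k})$ one has $\lambda_k\delta\norm{s_k}^2\gg\norm{s_k}^2$ once $\norm{s_k}\ll\delta/L_1$.

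This is not an artifact of your template; under this reading \eqref{eq:known-exact-rate} is false. Take the following instance:
\begin{itemize}
\item $\Z$ the unit ball of $\R^2$, $F(z)=Az$ with $A$ the rotation by $\pi/2$, and $J_k=A$, so \eqref{eq:uniform-condition} and \eqref{eq:directional-condition} hold for every $\delta>0$;
\item $0<\norm{x_0}<1$, exact inner solves, and $T=1$, so the output is $x_1$ whatever $\rho_1$ is.
\end{itemize}
Then $x_1=\gamma(A+\gamma I)^{-1}x_0$ with $\gamma\ge10\delta$; it is interior, since $\norm{x_1}^2=\ip{x_0}{x_1}$. Hence $\Gap(x_1)=\norm{x_1}\ge10\delta\norm{x_0}/\sqrt{1+100\delta^2}$. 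But $\nabla F$ is constant, so every $L_1>0$ is admissible and $32L_1D^3\to0$.

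What you have actually proved is the exact rate for an inner model whose regularization level is itself of order $L_1\norm{s_k}$. This is essentially the paper's $\delta_k^{\rm dir}$, which the paper places outside the theorem; its rate experiment shows the $\delta D^2/T$ term surviving at fixed $\bar\delta$. To make your argument a correct theorem you must state that hypothesis explicitly. Note also that dropping $\delta$ from the inner model can destroy its monotonicity within distance about $\delta/L_1$ of $v_k$, which is what the $10\delta$ term secures.

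\textbf{Two further points are repairable.} First, the relative-error step is misstated. Your $e_k$ contains the regularization $-\gamma_ks_k$ itself, now with $\gamma_k=\tfrac{L_1}{2}\norm{s_k}+\rho_k$ for $\Phi_k$. So $\lambda_k\norm{e_k}\le\theta\norm{s_k}$ only yields $\norm{\lambda_ke_k+s_k}\le(1+\theta)\norm{s_k}$, whereas the hybrid proximal extragradient criterion is $\norm{\lambda_ke_k+s_k}\le\sigma\norm{s_k}$ with $\sigma<1$. What actually delivers your one-step estimate is $\lambda_k\gamma_k\le1$ together with $\norm{E_k}\le\sigma\gamma_k\norm{s_k}$ for a fixed $\sigma<1$. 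With the coefficients of $\Phi_k$ the best $\sigma$ is exactly $1$ in both cases: $\tfrac{L_1}{2}\norm{s_k}^2+\delta\norm{s_k}$ against $(\tfrac{L_1}{2}\norm{s_k}+\delta)\norm{s_k}$, and $L_1\norm{s_k}^2$ against $L_1\norm{s_k}^2$. So the regularization must be enlarged.

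Second, ``$\sum_k\norm{s_k}\lesssim D^2$'' should read $\sum_k\norm{s_k}^2\lesssim D^2$. That is what your telescoped term gives once $\lambda_k(L_1\norm{s_k}+\rho_k)\asymp1$, and it is exactly what the H\"older step uses, so no ``reverse weighting'' is needed.

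After these repairs your uniform case is a legitimate alternative route to \eqref{eq:known-rate} up to absolute constants. Your template is a Monteiro--Svaiter extragradient, whereas the cited method (as I recall it) uses a dual-extrapolation outer loop. So it will not reproduce the factors $16\sqrt2$ and $32$, and taking them ``as stated'' is circular.
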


The two conditions play different roles, and the whole paper lives in the space
between them.  The uniform bound \eqref{eq:uniform-condition} is a statement
about a matrix: it must hold in every direction.  It makes the inner
regularized model admissible but leaves the
extra term $O(\delta D^2/T)$ in the rate, which decays more slowly than $T^{-3/2}$.  The directional condition
\eqref{eq:directional-condition} is a statement about a single vector: the
surrogate must be accurate along the step actually taken.  It removes the extra term.

\section{Similarity alone: the local-Jacobian baseline}
\label{sec:similarity}

The first question asks how far similarity gets us on its own.  The answer is
immediate from Theorem~\ref{thm:known-uniform}: the server uses its own
Jacobian as the surrogate and pays nothing for it.

\begin{cor}[Local-Jacobian rate]
\label{cor:local}
Set $J_k=\nabla F_1(v_k)$.  By \eqref{eq:beta} the uniform condition
\eqref{eq:uniform-condition} holds with $\delta=\beta$, so
\begin{equation}
  \Gap(\widetilde z_T)
  \le
  \frac{16\sqrt2\,L_1D^3}{T^{3/2}}
  +\frac{16\sqrt2\,\beta D^2}{T}.
  \label{eq:local-rate}
\end{equation}
\end{cor}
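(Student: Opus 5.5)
The plan is a direct specialization of Theorem~\ref{thm:known-uniform}: verify the uniform condition \eqref{eq:uniform-condition} for the surrogate $J_k=\nabla F_1(v_k)$ with $\delta=\beta$, and then read off the first rate \eqref{eq:known-rate} with $\delta$ replaced by $\beta$.

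\textbf{Step 1: the uniform bound.} Each outer point $v_k$ lies in $\Z$, since the VIJI iterates are kept feasible (or are projected onto $\Z$). By the definition \eqref{eq:beta},
\[
\norm{\nabla F(v_k)-J_k}_{\op}=\norm{\nabla F(v_k)-\nabla F_1(v_k)}_{\op}\le\sup_{z\in\Z}\norm{\nabla F(z)-\nabla F_1(z)}_{\op}=\beta .
\]
This is exactly \eqref{eq:uniform-condition} with $\delta=\beta$, and it holds at every outer iteration.

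\textbf{Step 2: the remaining hypotheses.} I would check that the rest of Theorem~\ref{thm:known-uniform} imposes nothing on $J_k$ beyond the uniform bound.
\begin{itemize}
\item The inner regularized VI must be solved to its certificate. Since $J_k$ is a fixed matrix known to the server, this inner solve is purely local.
\item The adaptive dual-step parameter is set equal to $\delta$, so it is set to $\beta$. This is a known constant, and $\beta\le 2L_0$ keeps it finite.
\end{itemize}
Substituting $\delta=\beta$ into \eqref{eq:known-rate} then yields \eqref{eq:local-rate}.

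\textbf{Main obstacle.} The only real concern is whether the inner subproblem remains monotone when $J_k$ is a nonsymmetric, possibly non-monotone matrix $\nabla F_1(v_k)$; for instance, $F_1$ alone need not be monotone. In VIJI this is exactly what the regularization by the parameter $\delta$ is for: it adds a $\delta$-proximal term, and $J_k+\delta I\succeq \nabla F(v_k)-\norm{\nabla F(v_k)-J_k}_{\op}I+\delta I\succeq0$ in the symmetric-part sense. Thus choosing the parameter equal to $\beta$ restores monotonicity, and the theorem applies verbatim.
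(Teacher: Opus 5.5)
Your proposal is correct and is the paper's own argument: the corollary is a direct specialization of Theorem~\ref{thm:known-uniform} with $\delta=\beta$, justified by $v_k\in\Z$ and the definition \eqref{eq:beta}. Your extra check that the inner model stays monotone is not needed once Theorem~\ref{thm:known-uniform} is used as a black box, but it is correct and consistent with the regularizer $10\delta+5L_1\norm{x-v}$ in \eqref{eq:app-model}.
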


We call \eqref{eq:local-rate} the \emph{local-Jacobian rate} and the method attaining it
the local-Jacobian baseline.

\begin{cor}[Communication of the local-Jacobian baseline]
\label{cor:local-comm}
To reach $\Gap\le\varepsilon$ the local-Jacobian baseline needs
\[
  T^{\rm loc}_\varepsilon
  =O\!\left(
      \left(\frac{L_1D^3}{\varepsilon}\right)^{2/3}
      +\frac{\beta D^2}{\varepsilon}
    \right)
\]
outer iterations, hence $N_{\rm sync}=2T^{\rm loc}_\varepsilon$ rounds and
$N_{\rm scal}=2MdT^{\rm loc}_\varepsilon$ transmitted scalars, with
$N_{\rm JVP}=N_J=0$.
\end{cor}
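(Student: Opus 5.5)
The plan is to invert the local-Jacobian rate \eqref{eq:local-rate} from Corollary~\ref{cor:local}, and then count the communication done per outer iteration. For the iteration count, it suffices that each of the two terms in \eqref{eq:local-rate} is at most $\varepsilon/2$. The first term satisfies $16\sqrt2\,L_1D^3T^{-3/2}\le\varepsilon/2$ as soon as $T\ge(32\sqrt2\,L_1D^3/\varepsilon)^{2/3}$. The second satisfies $16\sqrt2\,\beta D^2T^{-1}\le\varepsilon/2$ as soon as $T\ge32\sqrt2\,\beta D^2/\varepsilon$. Taking $T$ to be the ceiling of the sum of these two thresholds gives $\Gap(\widetilde z_T)\le\varepsilon$. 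Absorbing the absolute constants yields the stated $T^{\rm loc}_\varepsilon$.

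For the communication count, I will examine one outer iteration of the VIJI scheme of Theorem~\ref{thm:known-uniform} with $J_k=\nabla F_1(v_k)$, sorting each quantity into local and global work.
\begin{itemize}
\item \emph{Local work.} The surrogate $\nabla F_1(v_k)$ is evaluated at the server, so it costs no communication. The regularized inner VI is built from $J_k$, $v_k$ and the already known value $F(v_k)$, so it too is solved entirely at the server.
\item \emph{Global work.} The outer extragradient-type step needs the global operator at two points: $F(v_k)$ to form the model, and $F$ at the inner solution $x_{k+1}$ for the dual/extrapolation update. Each is one collective round in which every node sends a length-$d$ vector $w_iF_i(\cdot)$. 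That is $2$ rounds and $2Md$ scalars per iteration.
\end{itemize}
No Jacobian--vector product or full Jacobian is ever requested, so $N_{\rm JVP}=N_J=0$. Multiplying the per-iteration cost by $T^{\rm loc}_\varepsilon$ gives $N_{\rm sync}=2T^{\rm loc}_\varepsilon$ and $N_{\rm scal}=2MdT^{\rm loc}_\varepsilon$.

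The main point to verify is that the VIJI outer loop of \cite{agafonov2024} really uses exactly two global operator values per iteration. One risk is that the parameter choice behind \eqref{eq:known-rate} involves a line search or an adaptive $\lambda_k$ requiring extra $F$-evaluations; here the dual-step parameter is set to the fixed bound $\delta=\beta$, and I would check that no such search enters. A second risk is that the inner certificate might need global information. I would argue it does not, because it is stated purely in terms of the model operator $F(v_k)+J_k(z-v_k)+\text{regularizer}$, which the server holds. One minor bookkeeping detail remains: if $F(v_{k+1})$ coincides with a point already queried, the count only decreases, so ``$2$ per iteration'' is a valid upper bound.
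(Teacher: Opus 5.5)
Your argument is correct and is essentially the paper's own. The paper gives no separate proof, but it obtains the corollary exactly this way: it inverts the local-Jacobian rate \eqref{eq:local-rate} of Corollary~\ref{cor:local} and charges each outer iteration two global operator reductions of $Md$ scalars, with no JVP or Jacobian traffic, the same accounting as in Table~\ref{tab:per-iteration} and Theorem~\ref{thm:communication}. Your explicit $\varepsilon/2$ splitting and your check that the inner subproblem and its certificate are server-local simply spell out steps the paper leaves implicit.
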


This answers the first question.  An optimal first-order method needs
$O(L_0D^2/\varepsilon)$ iterations and the same two vector reductions per
iteration.  The leading terms have the same order in $\varepsilon$, but the
constant is $\beta$ instead of $L_0$, and the remaining term is of lower order.  Since $\beta$ is small exactly when the nodes hold similar data, the
baseline then sends far fewer scalars in total, and the same amount per
iteration.

The baseline has one weakness.  The $\beta/T$ term eventually dominates, from
\[
  T\asymp\left(\frac{L_1D}{\beta}\right)^{2},
\]
after which the method converges at the first-order rate with a better
constant.
High accuracy becomes communication-expensive
whenever $\beta>0$, which is the generic case.

\section{Residual quasi-Newton}
\label{sec:qn}

The second question asks whether a quasi-Newton approximation improves on the
baseline.  We first say precisely what is approximated.

\subsection{What the quasi-Newton matrix approximates}

The server knows $\nabla F_1$ exactly, so there is no reason to approximate the
whole Jacobian.  We approximate only the residual Jacobian $R_k=\nabla G(v_k)$
of Section~\ref{sec:setup}.  If $B_k$ is such an approximation, the surrogate is
\begin{equation}
  J_k^{\rm QN}:=\nabla F_1(v_k)+B_k,
  \label{eq:combined-qn}
\end{equation}
and the resulting Jacobian error is
\begin{equation}
  H_k:=R_k-B_k
  =\nabla F(v_k)-J_k^{\rm QN}.
  \label{eq:Hk}
\end{equation}
The identity \eqref{eq:Hk} is exact: the server part contributes nothing to the
error, and all inexactness of the surrogate is the residual-QN error $H_k$.
This is the sense in which the exact local Jacobian and the quasi-Newton
approximation are combined.

Section~\ref{sec:setup} makes $R_k$ the right target: it is small under
similarity, and its secants cost no communication.

\subsection{Building $B_k$ from stored residual secants}

For residual secant pairs
\[
  s_j=b_j-a_j,
  \qquad
  y_j=G(b_j)-G(a_j),
  \qquad j=1,\dots,m,
\]
we have $\norm{y_j}\le\beta\norm{s_j}$ by \eqref{eq:G-lip}.  Collect them into
\[
  S=[s_1,\ldots,s_m],
  \qquad
  Y=[y_1,\ldots,y_m],
\]
and define the minimum-Frobenius-norm least-squares secant approximation
\begin{equation}
  \widehat B_k:=YS^\dagger,
  \label{eq:history-prior}
\end{equation}
where $S^\dagger$ is the Moore--Penrose pseudoinverse, that is, the
minimum-Frobenius-norm minimizer of $\norm{BS-Y}_{\mathrm F}$.
If $S$ has full column
rank, then $S^\dagger S=I_m$ and $\widehat B_kS=Y$, so $\widehat B_k$ is an
exact multisecant matrix~\cite{fang2009}; for rank-deficient $S$ it is the
minimum-norm least-squares solution.  No rank assumption on $S$ is needed
anywhere below, and no separate approximation of an individual worker operator
$F_i$ is ever formed.

Similarity gives one piece of information that a general quasi-Newton problem
does not have: the unknown target itself lies in a known set,
\[
  R_k\in\mathcal B_\beta,
  \qquad
  \mathcal B_\beta:=\{B\in\R^{d\times d}:\norm{B}_{\op}\le\beta\}.
\]
There is no reason to return a matrix outside this set, so we use the projected
approximation
\begin{equation}
  B_k:=\Pi_{\mathcal B_\beta}^{\mathrm F}\bigl(\widehat B_k\bigr),
  \label{eq:projected-history-prior}
\end{equation}
where $\Pi^{\mathrm F}$ is the metric projection in the Frobenius norm,
computationally a clipping of the singular values at $\beta$.

\begin{pro}[Uniform bound from similarity]
\label{prop:qn-envelope}
For the projected approximation \eqref{eq:projected-history-prior},
\begin{equation}
  \norm{R_k-B_k}_{\op}\le 2\beta
  \qquad\text{for every }k,
  \label{eq:projected-residual-error}
\end{equation}
and the projection never increases the Frobenius error:
\begin{equation}
  \norm{R_k-B_k}_{\mathrm F}
  \le
  \norm{R_k-\widehat B_k}_{\mathrm F}.
  \label{eq:projection-nonexpansive}
\end{equation}
\end{pro}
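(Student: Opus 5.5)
The plan rests on two facts: $\mathcal B_\beta$ is a closed convex set in the Hilbert space $(\R^{d\times d},\ip{\cdot}{\cdot}_{\mathrm F})$, and $R_k$ lies in it. Both bounds then come from basic properties of metric projection. I start by recording membership. By \eqref{eq:Rk}, $R_k=\nabla G(v_k)=\nabla F(v_k)-\nabla F_1(v_k)$ with $v_k\in\Z$, so \eqref{eq:beta} gives $\norm{R_k}_{\op}\le\beta$, that is, $R_k\in\mathcal B_\beta$. The set $\mathcal B_\beta$ is a norm ball, hence convex. It is closed because the operator norm is continuous. So the Frobenius projection $\Pi^{\mathrm F}_{\mathcal B_\beta}$ is well defined and single-valued.

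For \eqref{eq:projected-residual-error}, the projection lands in $\mathcal B_\beta$ by construction, so $\norm{B_k}_{\op}\le\beta$. The triangle inequality then gives
\[
\norm{R_k-B_k}_{\op}\le\norm{R_k}_{\op}+\norm{B_k}_{\op}\le2\beta .
\]
This holds whatever the secants are, which is why no rank or history assumption appears. I will also confirm that the Frobenius projection really is singular-value clipping, since the paper asserts this. The step is routine. Both the Frobenius and operator norms are unitarily invariant, and $\mathcal B_\beta$ is unitarily invariant. Writing $\widehat B_k=U\Sigma V^\top$, von Neumann's trace inequality shows that the nearest point in $\mathcal B_\beta$ is $U\min(\Sigma,\beta)V^\top$. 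The operator-norm bound itself does not depend on this description.

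For \eqref{eq:projection-nonexpansive}, I use the standard fact that projection onto a closed convex set is nonexpansive and fixes points of the set. Since $R_k\in\mathcal B_\beta$, we have $\Pi^{\mathrm F}_{\mathcal B_\beta}(R_k)=R_k$. Therefore
\[
\norm{R_k-B_k}_{\mathrm F}
=\norm{\Pi^{\mathrm F}_{\mathcal B_\beta}(R_k)-\Pi^{\mathrm F}_{\mathcal B_\beta}(\widehat B_k)}_{\mathrm F}
\le\norm{R_k-\widehat B_k}_{\mathrm F}.
\]
Alternatively, the variational characterization $\ip{\widehat B_k-B_k}{C-B_k}_{\mathrm F}\le0$ for all $C\in\mathcal B_\beta$, applied with $C=R_k$, gives the same bound after expanding $\norm{R_k-\widehat B_k}_{\mathrm F}^2$.

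I expect no real obstacle. The only point needing care is that the projection is taken in the Frobenius norm, a Hilbert norm where nonexpansiveness holds, while the constraint set is defined by the operator norm. Nonexpansiveness therefore holds only in the Frobenius norm, and this is exactly what \eqref{eq:projection-nonexpansive} claims. The operator-norm statement \eqref{eq:projected-residual-error} instead comes from membership plus the triangle inequality, not from nonexpansiveness.
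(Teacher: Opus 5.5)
Your proof is correct and follows essentially the same argument as the paper. Membership $R_k\in\mathcal B_\beta$ from \eqref{eq:beta}, combined with the triangle inequality, gives \eqref{eq:projected-residual-error}; Frobenius nonexpansiveness of the metric projection onto the closed convex set $\mathcal B_\beta\ni R_k$ gives \eqref{eq:projection-nonexpansive}, and your singular-value-clipping check is correct but, as you note yourself, not needed for either bound.
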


Both statements are immediate: $R_k$ and $B_k$ both lie in $\mathcal B_\beta$,
which gives \eqref{eq:projected-residual-error} by the triangle inequality, and
$R_k\in\mathcal B_\beta$ makes the projection nonexpansive relative to $R_k$.
The threshold is thus fixed by the similarity assumption itself.  The
projection is not needed to run the method; it is what turns similarity into a
bound on $\norm{H_k}$ that holds whatever the stored secants look like.

\subsection{How good is a model built from old secants?}

A secant measured on an earlier segment describes the current residual Jacobian
only up to the change of curvature between the two points.  Write
\[
  P_S:=SS^\dagger
\]
for the orthogonal projector onto the span of the stored directions.

\begin{teo}[Error of a model built from stored secants]
\label{thm:recycled-secants}
Assume \eqref{eq:residual-jac-lip}.  The unprojected matrix
\eqref{eq:history-prior} satisfies
\begin{equation}
  R_k-\widehat B_k
  =R_k(I-P_S)+(R_kS-Y)S^\dagger,
  \label{eq:history-decomposition}
\end{equation}
and the two terms are Frobenius-orthogonal, so
\begin{equation}
  \norm{R_k-\widehat B_k}_{\mathrm F}^2
  =\norm{R_k(I-P_S)}_{\mathrm F}^2
  +\norm{(R_kS-Y)S^\dagger}_{\mathrm F}^2.
  \label{eq:history-pythagoras}
\end{equation}
For the projected matrix \eqref{eq:projected-history-prior},
\begin{equation}
\begin{split}
  \norm{R_k-B_k}_{\mathrm F}^2
  \le\;&
  \norm{R_k(I-P_S)}_{\mathrm F}^2\\
  &+L_G^2\norm{S^\dagger}_{\op}^2
  \sum_{j=1}^{m}
  \left(\norm{v_k-a_j}+\frac12\norm{s_j}\right)^2
  \norm{s_j}^2.
\end{split}
\label{eq:history-bound}
\end{equation}
\end{teo}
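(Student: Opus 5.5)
The decomposition \eqref{eq:history-decomposition} is pure algebra. I will write $R_k-\widehat B_k=R_k-R_kP_S+R_kSS^\dagger-YS^\dagger$, which uses only $P_S=SS^\dagger$. For the Frobenius orthogonality I compute the trace inner product
\[
\operatorname{tr}\bigl((I-P_S)^\top R_k^\top (R_kS-Y)S^\dagger\bigr)
=\operatorname{tr}\bigl(R_k^\top(R_kS-Y)S^\dagger(I-P_S)\bigr),
\]
using symmetry of the orthogonal projector $P_S$ and cyclicity of the trace. The Penrose identity $S^\dagger SS^\dagger=S^\dagger$ gives $S^\dagger(I-P_S)=S^\dagger-S^\dagger SS^\dagger=0$. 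Hence the cross term vanishes and \eqref{eq:history-pythagoras} follows. No rank assumption enters, matching the remark before the theorem.

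For \eqref{eq:history-bound} I will first invoke \eqref{eq:projection-nonexpansive} from Proposition~\ref{prop:qn-envelope} to replace $B_k$ by $\widehat B_k$. Then I use \eqref{eq:history-pythagoras}, and it remains to bound the second term. I will use $\norm{AS^\dagger}_{\mathrm F}\le\norm{A}_{\mathrm F}\norm{S^\dagger}_{\op}$ with $A=R_kS-Y$. Since $\norm{A}_{\mathrm F}^2=\sum_j\norm{R_ks_j-y_j}^2$, the whole estimate reduces to a per-column bound.

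The per-column bound is the one real step. By the fundamental theorem of calculus along the segment $a_j+t s_j$, which lies in $\Z$ by convexity provided the secant endpoints lie in $\Z$ (I will state this as implicit),
\[
y_j=G(b_j)-G(a_j)=\int_0^1\nabla G(a_j+ts_j)\,s_j\,dt .
\]
Therefore
\[
R_ks_j-y_j=\int_0^1\bigl(\nabla G(v_k)-\nabla G(a_j+ts_j)\bigr)s_j\,dt .
\]
Applying \eqref{eq:residual-jac-lip} and the triangle inequality $\norm{v_k-a_j-ts_j}\le\norm{v_k-a_j}+t\norm{s_j}$, then integrating in $t$, gives
\[
\norm{R_ks_j-y_j}\le L_G\Bigl(\norm{v_k-a_j}+\tfrac12\norm{s_j}\Bigr)\norm{s_j}.
\]
Squaring and summing over $j$ yields exactly the stated bound.

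I expect the main care to go into the orthogonality identity, specifically placing the projector on the correct side. The factor is $R_k(I-P_S)$ with the projector on the right, so the cancellation must come from $S^\dagger(I-P_S)=0$ and not from $(I-P_S)S=0$. The remaining steps are routine.
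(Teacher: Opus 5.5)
Your proposal is correct and follows the paper's proof essentially step for step. Your trace computation with $S^\dagger(I-P_S)=0$ is the same fact the paper uses in the form $(R_kS-Y)S^\dagger=(R_kS-Y)S^\dagger P_S$, and the column-wise fundamental-theorem-of-calculus bound, the estimate $\norm{AS^\dagger}_{\mathrm F}\le\norm{A}_{\mathrm F}\norm{S^\dagger}_{\op}$, and the appeal to \eqref{eq:projection-nonexpansive} are exactly the paper's remaining steps. The paper leaves implicit that the segments $a_j+ts_j$ must lie in $\Z$ for \eqref{eq:residual-jac-lip} to apply, so you were right to state this hypothesis explicitly.
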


The decomposition separates two sources of error that behave
differently.  The
first term is the part of the residual Jacobian acting in directions the stored
secants do not cover; it is reduced by collecting more and more
diverse secants.  The second term is the price of using secants measured away from
$v_k$; it grows with the distance $\norm{v_k-a_j}$ and is amplified when $S$ is
poorly conditioned. Many nearly parallel secants can therefore give a worse model than a few
well-spread ones.

Products stored from earlier certificate tests admit the same kind of bound, which lets the certified method of the next section to reuse them.

\begin{cor}[Error of a model built from stored products]
\label{cor:recycled-jvp}
Suppose $u_1,\ldots,u_m$ are orthonormal and that at earlier points $v_j$ the
algorithm stored the exact products $h_j=\nabla G(v_j)u_j$.  Put
\[
  U=[u_1,\ldots,u_m],
  \quad P_U=UU^{\mathsf T},
  \quad
  \widehat B_k^{\rm jvp}:=\sum_{j=1}^{m}h_ju_j^{\mathsf T},
  \quad
  B_k^{\rm jvp}:=\Pi_{\mathcal B_\beta}^{\mathrm F}\bigl(\widehat B_k^{\rm jvp}\bigr).
\]
Then, under \eqref{eq:residual-jac-lip},
\begin{equation}
  \norm{R_k-B_k^{\rm jvp}}_{\mathrm F}^2
  \le
  \norm{R_k(I-P_U)}_{\mathrm F}^2
  +L_G^2\sum_{j=1}^{m}\norm{v_k-v_j}^2.
  \label{eq:recycled-jvp-bound}
\end{equation}
\end{cor}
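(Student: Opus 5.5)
We prove Corollary~\ref{cor:recycled-jvp} by mimicking the proof of Theorem~\ref{thm:recycled-secants}, with $S$ replaced by the orthonormal matrix $U$ and the secant matrix $Y$ replaced by $H:=[h_1,\ldots,h_m]$. Since $U$ has orthonormal columns, $U^\dagger=U^{\mathsf T}$, $\norm{U^\dagger}_{\op}=1$ and $P_U=UU^{\mathsf T}$. The unprojected matrix is then
\[
  \widehat B_k^{\rm jvp}=\sum_j h_ju_j^{\mathsf T}=HU^{\mathsf T}=HU^\dagger,
\]
which has exactly the form \eqref{eq:history-prior} with $(S,Y)$ replaced by $(U,H)$.

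\textbf{Step 1 (decomposition).} We repeat the decomposition \eqref{eq:history-decomposition}:
\[
  R_k-\widehat B_k^{\rm jvp}
  =R_k(I-P_U)+(R_kU-H)U^{\mathsf T}.
\]
Expanding $R_kP_U=R_kUU^{\mathsf T}$ shows the identity directly. The two terms are Frobenius-orthogonal because
\[
  \operatorname{tr}\bigl((I-P_U)R_k^{\mathsf T}(R_kU-H)U^{\mathsf T}\bigr)
  =\operatorname{tr}\bigl(U^{\mathsf T}(I-P_U)R_k^{\mathsf T}(R_kU-H)\bigr)=0,
\]
using $U^{\mathsf T}(I-P_U)=0$. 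This gives the Pythagorean identity
\[
  \norm{R_k-\widehat B_k^{\rm jvp}}_{\mathrm F}^2
  =\norm{R_k(I-P_U)}_{\mathrm F}^2+\norm{(R_kU-H)U^{\mathsf T}}_{\mathrm F}^2 .
\]

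\textbf{Step 2 (bounding the second term).} Right multiplication by $U^{\mathsf T}$ preserves the Frobenius norm, since $UU^{\mathsf T}$ is a projector and $U^{\mathsf T}U=I_m$. Hence
\[
  \norm{(R_kU-H)U^{\mathsf T}}_{\mathrm F}^2=\norm{R_kU-H}_{\mathrm F}^2=\sum_j\norm{(\nabla G(v_k)-\nabla G(v_j))u_j}^2 .
\]
By \eqref{eq:residual-jac-lip} and $\norm{u_j}=1$, each summand is at most $L_G^2\norm{v_k-v_j}^2$. Unlike the secant case, there is no $\tfrac12\norm{s_j}$ term, because each $h_j$ is an exact directional derivative at $v_j$ rather than an averaged Jacobian along a segment. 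There is also no conditioning factor, because $\norm{U^\dagger}_{\op}=1$.

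\textbf{Step 3 (projection).} We pass to $B_k^{\rm jvp}$ using \eqref{eq:projection-nonexpansive} from Proposition~\ref{prop:qn-envelope}. Its proof applies verbatim to any input matrix: $R_k\in\mathcal B_\beta$ by \eqref{eq:beta}, and $\mathcal B_\beta$ is closed and convex, so $\Pi^{\mathrm F}_{\mathcal B_\beta}$ is nonexpansive and fixes $R_k$. Therefore $\norm{R_k-B_k^{\rm jvp}}_{\mathrm F}\le\norm{R_k-\widehat B_k^{\rm jvp}}_{\mathrm F}$. Combining this with Steps 1 and 2 yields \eqref{eq:recycled-jvp-bound}.

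None of these steps is hard. The only point needing care is the orthogonality in Step 1, which rests entirely on $U^{\mathsf T}(I-P_U)=0$. If instead we invoked Theorem~\ref{thm:recycled-secants} as a black box, we would have to interpret each $h_j$ as a degenerate secant (the limit $s_j\to0$ after rescaling). The direct computation above avoids that.
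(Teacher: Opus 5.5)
Your proof is correct and is essentially the paper's argument: the paper splits $R_k-\widehat B_k^{\rm jvp}$ into its restrictions to $\operatorname{range}(U)^\perp$ and $\operatorname{range}(U)$, which are exactly your two terms $R_k(I-P_U)$ and $(R_kU-H)U^{\mathsf T}$. It then bounds the columns $(R_k-\nabla G(v_j))u_j$ by $L_G\norm{v_k-v_j}$ via \eqref{eq:residual-jac-lip}, and finishes with the nonexpansiveness of the projection onto $\mathcal B_\beta\ni R_k$, as you do. Your trace computation for the orthogonality and your check that right multiplication by $U^{\mathsf T}$ preserves the Frobenius norm only spell out steps the paper states in one line.
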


\subsection{What residual quasi-Newton achieves, and what it does not}

Putting \eqref{eq:combined-qn} into Theorem~\ref{thm:known-uniform} gives the
answer to the second question.

\begin{pro}[Uncertified rate]
\label{prop:combined-rate}
Suppose $\norm{H_k}_{\op}\le\delta_{\rm QN}$ at every outer iteration and that
$J_k^{\rm QN}$ is used without a directional certificate.  Then
\begin{equation}
  \Gap(\widetilde z_T)
  \le
  \frac{16\sqrt2\,L_1D^3}{T^{3/2}}
  +\frac{16\sqrt2\,\delta_{\rm QN}D^2}{T}.
  \label{eq:combined-rate}
\end{equation}
For the projected approximation \eqref{eq:projected-history-prior},
Proposition~\ref{prop:qn-envelope} gives $\delta_{\rm QN}=2\beta$, hence
\[
  \Gap(\widetilde z_T)
  \le
  \frac{16\sqrt2\,L_1D^3}{T^{3/2}}
  +\frac{32\sqrt2\,\beta D^2}{T}.
\]
\end{pro}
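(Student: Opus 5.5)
The plan is to reduce Proposition~\ref{prop:combined-rate} directly to the first half of Theorem~\ref{thm:known-uniform}. I take the surrogate $J_k=J_k^{\rm QN}=\nabla F_1(v_k)+B_k$ from \eqref{eq:combined-qn} and check that it satisfies the uniform condition \eqref{eq:uniform-condition} with $\delta=\delta_{\rm QN}$. The exact identity \eqref{eq:Hk} gives $\nabla F(v_k)-J_k^{\rm QN}=R_k-B_k=H_k$, because the server Jacobian $\nabla F_1(v_k)$ cancels against the corresponding part of $\nabla F(v_k)=\nabla F_1(v_k)+R_k$. The hypothesis $\norm{H_k}_{\op}\le\delta_{\rm QN}$ is therefore exactly \eqref{eq:uniform-condition}.

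Since no directional certificate is imposed, I invoke only the uniform part of Theorem~\ref{thm:known-uniform}. I use the parameter choice of \cite[Theorem~3.2]{agafonov2024} with the adaptive dual-step parameter set to $\delta_{\rm QN}$, and I assume the inner regularized VIs are solved to the required certificate. Substituting $\delta=\delta_{\rm QN}$ into \eqref{eq:known-rate} gives \eqref{eq:combined-rate}.

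One point needs checking: that the VIJI analysis places no requirement on $J_k$ beyond \eqref{eq:uniform-condition}. In particular, $J_k^{\rm QN}$ is generally nonsymmetric and need not be monotone. Theorem~\ref{thm:known-uniform} is stated for an arbitrary surrogate satisfying the uniform bound, and the regularization with parameter $\delta$ is what restores monotonicity of the inner model. So nothing further is needed, but I would state this explicitly in the proof.

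For the second claim, Proposition~\ref{prop:qn-envelope} supplies the bound. Both $R_k$ and $B_k$ lie in $\mathcal B_\beta$: the first by \eqref{eq:beta}, the second by the construction \eqref{eq:projected-history-prior}. Hence $\norm{H_k}_{\op}\le 2\beta$ at every $k$, regardless of the stored secants. Plugging $\delta_{\rm QN}=2\beta$ into \eqref{eq:combined-rate} doubles the constant and gives $32\sqrt2\,\beta D^2/T$.

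I do not expect a real obstacle, since the argument is a substitution. The only care needed is that $\delta_{\rm QN}$ is a known constant fixed in advance, such as $2\beta$, so that it can be used as the dual-step parameter required in \cite[Theorem~3.2]{agafonov2024}.
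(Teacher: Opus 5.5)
Your proposal is correct and matches the paper's proof: the identity $\nabla F(v_k)-J_k^{\rm QN}=R_k-B_k=H_k$ turns the hypothesis into the uniform condition of Theorem~\ref{thm:known-uniform}, the dual-step parameter is set to $\rho_k=\delta_{\rm QN}$, and substituting into the rate gives the bound. For the projected approximation, Proposition~\ref{prop:qn-envelope} then supplies $\delta_{\rm QN}=2\beta$.
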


The method still sends only two vectors per iteration, so it is as cheap as the
baseline and still
cheaper than an optimal first-order method.  In that sense the answer to the
second question is yes.

The answer is also incomplete.  The guaranteed bound
$\delta_{\rm QN}\le2\beta$ is, in the worst case, no better than the baseline
value $\beta$; what a good model really buys is a smaller $\norm{H_k}$ at the
iterations where the stored secants are informative, which
Theorem~\ref{thm:recycled-secants} quantifies.  A theorem that sees only a
uniform bound cannot use this, since any fixed nonzero $\delta_{\rm QN}$
contributes a $T^{-1}$ term.  The obstruction is not the quality of the matrix
but the demand that it be accurate in every direction.

\section{Directional certificates and the exact second-order rate}
\label{sec:method}

The third question asks for the exact second-order rate without the cost of a
full Jacobian.  By Theorem~\ref{thm:known-uniform} it is enough to enforce the
directional condition \eqref{eq:directional-condition} on accepted steps, a
condition on one vector that one global Jacobian--vector product verifies. The method checks it and refines only when the check fails.

Throughout this section the uniform bound is still needed, but only to make the
inner models admissible.  For the projected approximation,
Proposition~\ref{prop:qn-envelope} gives the same value at every iteration,
\begin{equation}
  \norm{H_k}_{\op}\le\bar\delta,
  \qquad
  \bar\delta:=2\beta,
  \label{eq:uniform-envelope}
\end{equation}
which is \eqref{eq:uniform-condition} with $\delta=\bar\delta$.

\subsection{Refinement}

Fix an outer iteration $k$.  After $r$ failed tests, let $U_{k,r}$ hold the
orthonormal directions learned so far, and put
\[
  P_{k,r}:=U_{k,r}U_{k,r}^{\mathsf T},
  \qquad
  C_{k,r}:=H_kP_{k,r},
  \qquad
  P_{k,0}=0,
  \quad C_{k,0}=0 .
\]
The surrogate used in trial $r$ is
\begin{equation}
  J_{k,r}=\nabla F_1(v_k)+B_k+C_{k,r},
  \label{eq:Jkr}
\end{equation}
and its error satisfies
\begin{equation}
  \nabla F(v_k)-J_{k,r}=H_k(I-P_{k,r}).
  \label{eq:refinement-invariant}
\end{equation}
The correction $C_{k,r}$ is never computed as a matrix product with the unknown
$H_k$; it is accumulated one rank-one term at a time from the failed tests, as
the algorithm below shows.  Identity \eqref{eq:refinement-invariant} has two
consequences: the error can only shrink during refinement, so
\eqref{eq:uniform-envelope} stays valid for every trial, and after $d$ failures
$P_{k,d}=I$ and the surrogate acts exactly like $\nabla F(v_k)$.

\begin{algorithm}[t]
\caption{Certified residual-QN refinement at outer iteration $k$}
\label{alg:certified-refinement}
\begin{algorithmic}[1]
\Inputs{outer point $v_k$; stored residual secants $(S,Y)$; constants $L_1$ and
$\beta$.}
\State \textbf{Server.} Form $\widehat B_k=YS^\dagger$ by a rank-revealing
  decomposition, or set $\widehat B_k=0$ if the memory is empty.
\State \textbf{Server.} Set $B_k=\Pi_{\mathcal B_\beta}^{\mathrm F}(\widehat B_k)$
  by clipping the singular values at $\beta$.
\State Set $U_{k,0}$ empty, $P_{k,0}=0$, $C_{k,0}=0$, $r=0$.
\While{true}
  \State \textbf{Server.} Set $J_{k,r}=\nabla F_1(v_k)+B_k+C_{k,r}$.
  \State \textbf{Server.} Solve the regularized inner subproblem of
    \cite[Algorithm~1]{agafonov2024} with surrogate $J_{k,r}$ and uniform bound
    $\bar\delta=2\beta$, to the inner certificate of
    \cite[Theorem~3.3]{agafonov2024}.  This uses no communication.  It returns a
    candidate $x_{k,r}$; set $s_{k,r}=x_{k,r}-v_k$.
  \If{$r=d$}
    \State Set $\rho_k=\tfrac{L_1}{2}\norm{s_{k,r}}$ and \Return $x_{k,r}$; by
      \eqref{eq:refinement-invariant} the Jacobian action is already exact.
  \EndIf
  \State \textbf{Communication.} Server broadcasts $s_{k,r}$; worker $i$ returns
    $g_i=w_i\nabla F_i(v_k)s_{k,r}$; a reduction gives
    $g=\nabla F(v_k)s_{k,r}$.
  \State \textbf{Server.} Compute the certificate residual
    $e_{k,r}=g-J_{k,r}s_{k,r}$.
  \If{$\norm{e_{k,r}}\le\tfrac{L_1}{2}\norm{s_{k,r}}^2$}
    \State Set $\rho_k=\tfrac{L_1}{2}\norm{s_{k,r}}$ and \Return $x_{k,r}$.
  \Else
    \State \textbf{Server.} Take the part of the step outside the learned
      subspace, $w_{k,r}=(I-P_{k,r})s_{k,r}$, and normalize,
      $u_{k,r+1}=w_{k,r}/\norm{w_{k,r}}$.
    \State \textbf{Server.} Since $e_{k,r}=H_kw_{k,r}$ by
      \eqref{eq:refinement-invariant}, the exact product in the new direction is
      $h_{k,r+1}=e_{k,r}/\norm{w_{k,r}}=H_ku_{k,r+1}$.
    \State \textbf{Server.} Append $u_{k,r+1}$ to $U_{k,r}$, and update
      $P_{k,r+1}=P_{k,r}+u_{k,r+1}u_{k,r+1}^{\mathsf T}$,
      $C_{k,r+1}=C_{k,r}+h_{k,r+1}u_{k,r+1}^{\mathsf T}$.
    \State $r\gets r+1$.
  \EndIf
\EndWhile
\end{algorithmic}
\end{algorithm}

A failed test always produces a genuinely new direction: if
$(I-P_{k,r})s_{k,r}=0$ then \eqref{eq:refinement-invariant} gives $e_{k,r}=0$
and the test cannot fail, so $w_{k,r}\neq0$ on failure and the product delivers
the exact value $H_ku_{k,r+1}$ in a direction the model did not know.

\subsection{The exact rate}

\begin{teo}[Certified distributed refinement]
\label{thm:main}
Assume \eqref{eq:monotone}, \eqref{eq:jac-lip}, and \eqref{eq:beta}.  At each
outer iteration build $B_k$ by \eqref{eq:projected-history-prior}, run
Algorithm~\ref{alg:certified-refinement}, and after acceptance use
$\rho_k=\tfrac{L_1}{2}\norm{s_k}$.  Then every accepted step satisfies
\eqref{eq:directional-condition}, and
\begin{equation}
  \Gap(\widetilde z_T)
  \le\frac{32L_1D^3}{T^{3/2}}.
  \label{eq:main-rate}
\end{equation}
The refinement stops after at most $d$ global Jacobian--vector products per
outer iteration.
\end{teo}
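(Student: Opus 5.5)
The plan is to reduce Theorem~\ref{thm:main} to the second half of Theorem~\ref{thm:known-uniform}. That result needs three things at every outer iteration: the surrogate satisfies the uniform bound \eqref{eq:uniform-condition} with some $\delta$ (used for admissibility of the inner regularized VI); the accepted step satisfies the directional condition \eqref{eq:directional-condition}; and $\rho_k=\tfrac{L_1}{2}\norm{s_k}$. Given these, \eqref{eq:main-rate} is exactly \eqref{eq:known-exact-rate}. It therefore suffices to check these hypotheses for whatever trial surrogate $J_{k,r}$ Algorithm~\ref{alg:certified-refinement} accepts, and to bound the number of trials.

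\textbf{Step 1: the refinement invariant.} First I would prove \eqref{eq:refinement-invariant} by induction on $r$. At $r=0$ it is \eqref{eq:Hk}, since $C_{k,0}=0$ and $P_{k,0}=0$. For the inductive step, assume $\nabla F(v_k)-J_{k,r}=H_k(I-P_{k,r})$. Then $e_{k,r}=H_k(I-P_{k,r})s_{k,r}=H_kw_{k,r}$, so $h_{k,r+1}=H_ku_{k,r+1}$. Since $u_{k,r+1}\perp\operatorname{range}U_{k,r}$ and has unit norm, $U_{k,r+1}$ stays orthonormal and $P_{k,r+1}$ is an orthogonal projector. Moreover $C_{k,r+1}=H_kP_{k,r}+H_ku_{k,r+1}u_{k,r+1}^{\mathsf T}=H_kP_{k,r+1}$, which closes the induction. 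It also gives $\norm{H_k(I-P_{k,r})}_{\op}\le\norm{H_k}_{\op}\le2\beta$ by Proposition~\ref{prop:qn-envelope}. So \eqref{eq:uniform-condition} holds with $\delta=\bar\delta$ for every trial, and in particular for the accepted one.

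\textbf{Step 2: the directional condition at acceptance.} There are two ways to return. If the test $\norm{e_{k,r}}\le\tfrac{L_1}{2}\norm{s_{k,r}}^2$ passes, then $e_{k,r}=(\nabla F(v_k)-J_{k,r})s_{k,r}$ because the reduction computes $g=\nabla F(v_k)s_{k,r}$ exactly, so this is \eqref{eq:directional-condition} verbatim. If instead $r=d$, then $U_{k,d}$ consists of $d$ orthonormal vectors in $\R^d$, so $P_{k,d}=I$. The invariant then makes the error zero, and the condition holds trivially.

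\textbf{Step 3: counting products.} Each failed test adds one new orthonormal direction; this is well defined because failure forces $w_{k,r}\neq0$, as remarked before the theorem. Hence at most $d$ failures occur, each costing one global JVP, and the $r=d$ branch returns without communicating. This gives at most $d$ JVPs per outer iteration.

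The main obstacle is the interface with \cite{agafonov2024}: their Theorem~3.3 must tolerate a surrogate that is chosen adaptively after seeing the candidate step. Here $J_k$ depends on $s_k$ through the failed trials, so I need to check that their analysis only uses the final pair $(J_k,x_k)$ together with the inner certificate and $\rho_k$, not the independence of $J_k$ from $s_k$. I would argue that their per-iteration inequality is deterministic in the accepted triple, and that the uniform bound enters only through solvability and the inner certificate of the subproblem actually solved. Both of these hold for $J_{k,r}$ by Step~1, so the telescoping argument goes through unchanged.
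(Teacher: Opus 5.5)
Your proposal is correct and follows the paper's proof essentially step by step. The shared steps are:
\begin{itemize}
\item the induction showing $C_{k,r}=H_kP_{k,r}$ through the rank-one update $h_{k,r+1}=H_ku_{k,r+1}$;
\item the bound $\norm{H_k(I-P_{k,r})}_{\op}\le 2\beta$, which keeps one inexactness level valid for every trial;
\item the two acceptance cases, a passed test or $P_{k,d}=I$;
\item the orthonormality count of at most $d$ products, followed by an appeal to the directional part of Theorem~\ref{thm:known-uniform}.
\end{itemize}
The paper does not discuss the adaptivity concern you flag at all; it applies \cite[Theorem~3.3]{agafonov2024} directly to the accepted pair. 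Your argument that their per-iteration estimate depends only on the accepted surrogate, step, inner certificate and $\rho_k$ is therefore slightly more careful than the original.
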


This is the exact second-order rate of \eqref{eq:known-exact-rate}: the term
that carried $\beta$ in Corollary~\ref{cor:local} and $\delta_{\rm QN}$ in
Proposition~\ref{prop:combined-rate} is gone, and $N_J=0$.

\subsection{How many products the certificate costs}

The bound $q_k\le d$ only says the loop terminates.  The useful statement is
that the number of products is controlled by the quality of the model, and the
dimension enters only as a worst case.

\begin{teo}[Bound on failed certificates]
\label{thm:instance}
Let $\mathcal F_k$ be the set of failed trials at iteration $k$, let $s_{k,r}$
be the candidate step of trial $r$, and let $q_k$ be the number of global
Jacobian--vector products used.  Then
\begin{equation}
  \sum_{r\in\mathcal F_k}\norm{s_{k,r}}^2
  <\frac{4}{L_1^2}\norm{H_k}_{\mathrm F}^2.
  \label{eq:energy-budget}
\end{equation}
If no test fails, $q_k\le1$.  Otherwise, writing
$\tau_k:=\min_{r\in\mathcal F_k}\norm{s_{k,r}}$ for the length of the shortest
failed step,
\begin{equation}
  q_k
  \le
  \min\!\left\{
    d,\;
    1+\frac{4\norm{H_k}_{\mathrm F}^2}{L_1^2\tau_k^2}
  \right\}.
  \label{eq:q-bound}
\end{equation}
\end{teo}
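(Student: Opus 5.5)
The plan is to turn each failed certificate into one orthonormal column of the learned basis and then count Frobenius energy. At a failed trial $r$, identity \eqref{eq:refinement-invariant} gives $e_{k,r}=H_k(I-P_{k,r})s_{k,r}=H_kw_{k,r}$. Failure means $\norm{e_{k,r}}>\tfrac{L_1}{2}\norm{s_{k,r}}^2$. Since $w_{k,r}=\norm{w_{k,r}}u_{k,r+1}$ and $\norm{w_{k,r}}\le\norm{s_{k,r}}$ (an orthogonal projection does not increase length),
\[
  \frac{L_1}{2}\norm{s_{k,r}}^2<\norm{H_kw_{k,r}}\le\norm{s_{k,r}}\,\norm{H_ku_{k,r+1}}.
\]
Because the test fails, $s_{k,r}\neq0$, so we may divide by $\norm{s_{k,r}}$. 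This yields the per-failure bound $\norm{H_ku_{k,r+1}}>\tfrac{L_1}{2}\norm{s_{k,r}}$.

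Next I sum over failures. The vectors $u_{k,1},u_{k,2},\dots$ produced by failed trials are orthonormal by construction: each $u_{k,r+1}$ is the normalized component of $s_{k,r}$ orthogonal to $\operatorname{span}U_{k,r}$. The paper has already shown that $w_{k,r}\neq0$ on failure, so the normalization is well defined. Completing these vectors to an orthonormal basis of $\R^d$ gives
\[
  \sum_{r\in\mathcal F_k}\norm{H_ku_{k,r+1}}^2\le\norm{H_k}_{\mathrm F}^2.
\]
Squaring the per-failure bound and summing over $r\in\mathcal F_k$ then gives $\tfrac{L_1^2}{4}\sum_{r\in\mathcal F_k}\norm{s_{k,r}}^2<\norm{H_k}_{\mathrm F}^2$. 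The inequality is strict because $\mathcal F_k$ is nonempty whenever the sum is not empty; if $\mathcal F_k=\emptyset$ the sum is zero, and strictness needs $H_k\neq0$. I expect this degenerate case to be the only delicate point and will remark on it separately, noting that the bound \eqref{eq:energy-budget} is only used when $\mathcal F_k\neq\emptyset$.

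For the count, each trial with $r<d$ uses exactly one product, and the trial with $r=d$ uses none. Hence $q_k=|\mathcal F_k|+1$ if the loop ends with an accepted test, and $q_k=|\mathcal F_k|=d$ if it ends at $r=d$. Either way $q_k\le\min\{d,|\mathcal F_k|+1\}$, using $|\mathcal F_k|\le d$ (at most $d$ orthonormal directions exist) and, when $|\mathcal F_k|=d$, the fact that the algorithm returns at $r=d$ without a product. If no test fails, $q_k\le1$. Otherwise every term in the energy sum is at least $\tau_k^2$, so $|\mathcal F_k|\,\tau_k^2<\tfrac{4}{L_1^2}\norm{H_k}_{\mathrm F}^2$. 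Substituting this into $q_k\le1+|\mathcal F_k|$ gives \eqref{eq:q-bound}.

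The main step to get right is the orthonormality and Frobenius-energy argument, together with the off-by-one bookkeeping between the number of failures and the number of products at the terminal trial $r=d$.
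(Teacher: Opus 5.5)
Your proposal is correct and follows the paper's proof essentially step for step: the per-failure bound $\norm{H_ku_{k,r+1}}>\tfrac{L_1}{2}\norm{s_{k,r}}$ via $\norm{w_{k,r}}\le\norm{s_{k,r}}$, the Frobenius-energy sum over the orthonormal failed directions, and the split $q_k=|\mathcal F_k|+1$ (acceptance before $r=d$) versus $q_k=d$ (return at $r=d$ without a product). Your degenerate-case remark is also right, and the paper's proof passes over it: when $\mathcal F_k=\varnothing$ and $H_k=0$, the strict inequality \eqref{eq:energy-budget} reads $0<0$, so it holds strictly only for $\mathcal F_k\neq\varnothing$ and non-strictly in general.
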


The Frobenius norm appears because each failed test consumes one
orthogonal direction and, by \eqref{eq:energy-budget}, a share of
$\norm{H_k}_{\mathrm F}^2$ proportional to the squared length of the failed
step.  The budget is finite, so long failed steps are few.  Equivalently, since
$\norm{H_k}_{\mathrm F}^2=\operatorname{sr}(H_k)\norm{H_k}_{\op}^2$ for the
stable rank $\operatorname{sr}$, an error that is large but concentrated in few
directions is cheap to certify.

Combining Theorem~\ref{thm:instance} with Theorem~\ref{thm:recycled-secants}
expresses the cost directly in terms of the stored secants.

\begin{cor}[Product count from stored secants]
\label{cor:history-jvp}
If a test fails and $B_k$ is built from stored secants by
\eqref{eq:projected-history-prior}, then
\begin{equation}
\begin{split}
  q_k\le \min\Bigg\{d,\;1+\frac{4}{L_1^2\tau_k^2}
  \Bigg[
  &\norm{R_k(I-P_S)}_{\mathrm F}^2\\
  &+L_G^2\norm{S^\dagger}_{\op}^2
  \sum_{j=1}^{m}
  \left(\norm{v_k-a_j}+\frac12\norm{s_j}\right)^2
  \norm{s_j}^2
  \Bigg]\Bigg\}.
\end{split}
\label{eq:history-q-bound}
\end{equation}
\end{cor}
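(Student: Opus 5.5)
The corollary follows by substituting the Frobenius error bound of Theorem~\ref{thm:recycled-secants} into the product count of Theorem~\ref{thm:instance}. The plan has three steps.

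\emph{Step 1.} I would identify the quantity that Theorem~\ref{thm:instance} controls. By \eqref{eq:Hk}, $H_k=R_k-B_k$, where $B_k$ is the projected matrix \eqref{eq:projected-history-prior}. The refinement corrections $C_{k,r}$ do not enter here. Theorem~\ref{thm:instance} is stated in terms of $\norm{H_k}_{\mathrm F}$, the error of the trial-$0$ surrogate $\nabla F_1(v_k)+B_k$. Identity \eqref{eq:refinement-invariant} shows that later trials only remove components of this same $H_k$. So the relevant quantity is exactly $\norm{R_k-B_k}_{\mathrm F}$.

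\emph{Step 2.} Assume a test fails, so that $\mathcal F_k\neq\emptyset$ and $\tau_k>0$ is defined; $\tau_k>0$ holds because a failed step has $w_{k,r}\neq0$. Then \eqref{eq:q-bound} gives
\[
q_k\le\min\Bigl\{d,\;1+\frac{4\norm{H_k}_{\mathrm F}^2}{L_1^2\tau_k^2}\Bigr\}.
\]
Under \eqref{eq:residual-jac-lip}, which Theorem~\ref{thm:recycled-secants} needs and which I take as implicitly assumed in the corollary, the bound \eqref{eq:history-bound} applies to $\norm{R_k-B_k}_{\mathrm F}^2$. Its right-hand side is exactly the bracket in \eqref{eq:history-q-bound}.

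\emph{Step 3.} The map $x\mapsto\min\{d,1+cx\}$ with $c=4/(L_1^2\tau_k^2)>0$ is nondecreasing in $x$. Substituting the upper bound from Step~2 for $\norm{H_k}_{\mathrm F}^2$ therefore preserves the inequality and yields \eqref{eq:history-q-bound}.

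No real obstacle arises. The one point to check is that $H_k$ in Theorem~\ref{thm:instance} is the error of the projected matrix $B_k$, not of $\widehat B_k$. This holds because Theorem~\ref{thm:recycled-secants} already bounds the projected error via the nonexpansiveness \eqref{eq:projection-nonexpansive}. I would also note that the secant history $(S,Y)$ must be the one used to form $B_k$ at iteration $k$, so that both theorems refer to the same matrix.
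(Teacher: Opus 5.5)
Your proposal is correct and is the paper's argument: the paper's proof applies \eqref{eq:q-bound} with $H_k=R_k-B_k$ and substitutes \eqref{eq:history-bound}. Your additional remarks are accurate details that the one-line proof leaves unstated: the map $x\mapsto\min\{d,1+cx\}$ is nondecreasing, $\tau_k>0$ on failure, and the corollary implicitly relies on \eqref{eq:residual-jac-lip}.
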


This is where the quasi-Newton model finally pays off in a theorem: it
could not change the rate in Proposition~\ref{prop:combined-rate}, but here it
reduces the communication.

\subsection{End-to-end communication}

Let
\begin{equation}
  T_\varepsilon
  :=\left\lceil
      \left(\frac{32L_1D^3}{\varepsilon}\right)^{2/3}
    \right\rceil,
  \qquad
  Q_T:=\sum_{k=0}^{T-1}q_k,
  \label{eq:T-eps}
\end{equation}
the iterations needed for accuracy $\varepsilon$ by \eqref{eq:main-rate}
and the total number of global products over $T$ iterations.  We count two
global operator reductions per accepted iteration.}

\begin{teo}[End-to-end communication complexity]
\label{thm:communication}
The certified method reaches $\Gap\le\varepsilon$ with
\begin{align}
  N_{\rm sync}^{\rm cert}
  &=2T_\varepsilon+Q_{T_\varepsilon},
  &
  N_{\rm scal}^{\rm cert}
  &=Md\bigl(2T_\varepsilon+Q_{T_\varepsilon}\bigr),
  \label{eq:cert-comm}\\
  N_F^{\rm cert}&=2T_\varepsilon,
  &
  N_{\rm JVP}^{\rm cert}&=Q_{T_\varepsilon},
  \qquad N_J^{\rm cert}=0,
  \label{eq:cert-oracles}
\end{align}
and
\begin{equation}
  Q_T
  \le
  \min\!\left\{
    dT,\;
    T+\frac{4}{L_1^2}
    \sum_{\substack{0\le k<T\\ \mathcal F_k\neq\varnothing}}
    \frac{\norm{H_k}_{\mathrm F}^2}{\tau_k^2}
  \right\}.
  \label{eq:pathwise-Q}
\end{equation}
A method using exact Jacobians needs
\begin{equation}
  N_{\rm sync}^{\rm exact}=3T_\varepsilon,
  \qquad
  N_{\rm scal}^{\rm exact}=M(2d+d^2)T_\varepsilon,
  \label{eq:exact-comm}
\end{equation}
so that
\begin{equation}
  N_{\rm scal}^{\rm cert}\le N_{\rm scal}^{\rm exact},
  \label{eq:payload-domination}
\end{equation}
with strict inequality whenever $Q_T/T<d$.
\end{teo}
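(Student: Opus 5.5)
The plan is mostly bookkeeping on top of Theorem~\ref{thm:main} and Theorem~\ref{thm:instance}. First, take $T=T_\varepsilon$ from \eqref{eq:T-eps}. Then \eqref{eq:main-rate} gives $\Gap(\widetilde z_T)\le 32L_1D^3/T^{3/2}\le\varepsilon$, because $T^{3/2}\ge 32L_1D^3/\varepsilon$. So the certified method reaches the target after $T_\varepsilon$ accepted outer iterations, and it remains to count what each iteration transmits.

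For the counts, I will go through one outer iteration of Algorithm~\ref{alg:certified-refinement} and the outer scheme. The inner subproblem, the construction of $B_k$ (from secants built out of already reduced operator values and local $F_1$ evaluations, see Section~\ref{sec:setup}), the projection and the rank-one updates are all server-side and cost nothing. The outer scheme requests two global operator values per accepted iteration, which gives $N_F=2T_\varepsilon$. Each executed test costs one global JVP, and there are $q_k$ of them at iteration $k$, which gives $N_{\rm JVP}=Q_{T_\varepsilon}$. No full Jacobian is ever requested, so $N_J=0$. In the communication model each of these reductions is one synchronization and carries $Md$ scalars, which yields \eqref{eq:cert-comm}--\eqref{eq:cert-oracles}.

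For \eqref{eq:pathwise-Q}, I sum \eqref{eq:q-bound} over $k$. If $\mathcal F_k=\varnothing$, then $q_k\le1$. Otherwise $q_k\le\min\{d,\,1+4\norm{H_k}_{\mathrm F}^2/(L_1^2\tau_k^2)\}$. Summing gives $Q_T\le dT$ from the first branch and $Q_T\le T+\frac{4}{L_1^2}\sum_{\mathcal F_k\ne\varnothing}\norm{H_k}_{\mathrm F}^2/\tau_k^2$ from the second. Since a sum of minima is at most the minimum of the sums, both bounds hold at once.

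For the exact-Jacobian comparison, I count one global Jacobian per iteration in addition to the two operator values. That is three synchronizations, carrying $M(2d+d^2)$ scalars per iteration. By Theorem~\ref{thm:known-uniform} with $\delta=0$ this method reaches the same bound \eqref{eq:known-exact-rate}, so it also runs $T_\varepsilon$ iterations, which gives \eqref{eq:exact-comm}. Domination \eqref{eq:payload-domination} then reduces to $Md(2T+Q_T)\le M(2d+d^2)T$, i.e.\ $Q_T\le dT$, which is the first branch of \eqref{eq:pathwise-Q}. Equality requires $Q_T=dT$, so the inequality is strict when $Q_T/T<d$.

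The main obstacle is accounting, not analysis. I must make sure the paper's convention of ``two global operator reductions per accepted iteration'' is consistent, and that the $q_k\le d$ cap is correct. Algorithm~\ref{alg:certified-refinement} returns at $r=d$ without a test, so at most $d$ products are made; this gives the branch $q_k\le d$ and hence $Q_T\le dT$. I would state this explicitly. I would also note that identical iteration counts for both methods follow from the shared constant $32$ in \eqref{eq:known-exact-rate} and \eqref{eq:main-rate}.
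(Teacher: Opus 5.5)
Your proposal is correct and follows the paper's proof essentially step by step. You count two operator reductions plus $q_k$ JVPs per accepted iteration, sum the bound of Theorem~\ref{thm:instance} together with $q_k\le d$, and reduce \eqref{eq:payload-domination} to $Q_T\le dT$. You also make explicit two points the paper leaves implicit. The ceiling in \eqref{eq:T-eps} gives $\Gap\le\varepsilon$ via \eqref{eq:main-rate}. The exact-Jacobian method attains a guarantee at least as good, so it runs on the same horizon $T_\varepsilon$.
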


If the model is built from stored secants, $\norm{H_k}_{\mathrm F}^2$ in
\eqref{eq:pathwise-Q} may be replaced by the right-hand side of
\eqref{eq:history-bound}.

\begin{cor}[Bandwidth reduction factor]
\label{cor:bandwidth-factor}
For a common horizon $T$,
\begin{equation}
  \frac{N_{\rm scal}^{\rm cert}}{N_{\rm scal}^{\rm exact}}
  =\frac{2+Q_T/T}{d+2}.
  \label{eq:bandwidth-factor}
\end{equation}
Hence in any regime with $Q_T/T=o(d)$ the certified method transmits a
vanishing fraction of the scalars a full-Jacobian method needs, as $d$ grows.
\end{cor}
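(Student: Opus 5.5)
The corollary is a direct division of the two communication volumes from Theorem~\ref{thm:communication}, followed by an asymptotic reading of the resulting ratio. I would evaluate both counts at the same horizon $T$, replacing $T_\varepsilon$ by $T$. The derivation of \eqref{eq:cert-comm} and \eqref{eq:exact-comm} only counts messages per outer iteration and does not use the value of $T_\varepsilon$, so this is legitimate. The counts are
\[
  N_{\rm scal}^{\rm cert}=Md(2T+Q_T),
  \qquad
  N_{\rm scal}^{\rm exact}=M(2d+d^2)T=MdT(d+2).
\]
The step here is to check the per-iteration accounting behind these formulas. The certified method sends two operator reductions per iteration plus $q_k$ products, each carrying $Md$ scalars. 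The exact method sends two operator reductions and one full Jacobian, carrying $Md^2$ scalars.

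Dividing the two expressions cancels the common factor $MdT$ and gives
\[
  \frac{N_{\rm scal}^{\rm cert}}{N_{\rm scal}^{\rm exact}}
  =\frac{2T+Q_T}{T(d+2)}
  =\frac{2+Q_T/T}{d+2},
\]
which is \eqref{eq:bandwidth-factor}.

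For the asymptotic claim, suppose $Q_T/T=o(d)$. The numerator is then $2+o(d)$ and the denominator is $d+2$, so the ratio tends to $0$ as $d\to\infty$. In addition, \eqref{eq:pathwise-Q} gives $Q_T/T\le d$, so the ratio never exceeds $1$. This recovers \eqref{eq:payload-domination} as a consistency check.

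I do not expect a real obstacle. The only point needing care is to state that the comparison is at a common horizon $T$. Both methods share the rate \eqref{eq:main-rate} and \eqref{eq:known-exact-rate}, so they use the same $T_\varepsilon$, and the equality holds exactly at that horizon rather than only up to constants.
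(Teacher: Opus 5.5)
Your proposal is correct and matches the paper's proof. Like the paper, you evaluate both volumes from Theorem~\ref{thm:communication} at the common horizon $T$, cancel $MdT$, and then read off the vanishing ratio under $Q_T/T=o(d)$. Your closing justification that both methods share $T_\varepsilon$ is not needed, because the corollary takes the common horizon as a hypothesis.
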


Volume and rounds must be kept apart.  The exact-Jacobian method uses
three collective phases per iteration, the certified method $2+q_k$, and its
extra products are sequential, since each depends on the candidate produced
after the previous correction.  The certified method can therefore transmit far
less data and still use more rounds; which matters depends on the network.

\begin{table}[t]
\caption{Communication per accepted outer iteration.  The two operator
reductions common to all four methods are included.}
\label{tab:per-iteration}
\centering
\small
\begin{tabular}{lccc}
\hline
Method & Synchronizations & Scalars sent & Curvature source\\
\hline
Local Jacobian & $2$ & $2Md$ & local Jacobian\\
Residual QN & $2$ & $2Md$ & stored residual secants\\
Exact-J & $3$ & $M(2d+d^2)$ & full global Jacobian\\
Certified residual QN & $2+q_k$ & $Md(2+q_k)$ & $q_k$ sequential products\\
\hline
\end{tabular}
\end{table}

\section{Numerical study}
\label{sec:experiments}

\noindent
The study tests three statements, each of which can fail: that communicating curvature can reduce total cost; that the certificate of Theorem~\ref{thm:main} can replace full-Jacobian communication by $q_k\ll d$ global JVPs; and that the $\delta D^{2}/T$ term of \eqref{eq:known-rate} is attained.
Appendix~\ref{app:reproducibility} carries the auxiliary instances, the protocol
and every secondary measurement.

\paragraph{Instance and methods.}
\label{sec:instances}
The instance is distributed robust logistic regression with a shared adversarial
perturbation,
\begin{equation}
  \min_{\norm{w}\le R_w}\max_{\norm{r}\le R_r}\;
  \tfrac1N\textstyle\sum_{j=1}^{N}
  \log\bigl(1+e^{-b_jw^{\mathsf T}(a_j+r)}\bigr)-\tfrac{\gamma}{2}\norm{r}^2 ,
  \label{eq:instance}
\end{equation}
on \texttt{a9a} (VI dimension $d=246$) and \texttt{w8a} ($d=600$) over $M=25$ nodes, with
$R_w=1$, $R_r=1/4$ and $\gamma=R_w^2/4$, the smallest value for which
\eqref{eq:instance} is monotone; it is not strongly monotone, $\mu=0$ exactly,
and its Jacobian is nonsymmetric and $z$-dependent, so charging $Md^2$ per
refresh is not an artefact of the instance.  Class-balanced feature shards
indexed by $h\in[0,1]$ set the heterogeneity --- \texttt{a9a} at five levels with
five seeds, \texttt{w8a} at $h=0$ --- and the similarity radius
$\widehat\beta_{\rm sim}$ of \eqref{eq:beta-hat} is measured on each partition.

All methods share the outer algorithm, the inner solver, the outer step and the starting point, and differ only in the Jacobian
surrogate substituted into the regularized model \eqref{eq:app-model}: the
comparison isolates the Jacobian surrogate rather than changes to the outer algorithm.  The label \textit{Local-J} below denotes the local-Jacobian baseline $J_k=\nabla F_1(v_k)$.  With
$B_k=\Pi^{\mathrm F}_{\mathcal B_\beta}(Y_kS_k^\dagger)$ the projected
residual-QN approximation \eqref{eq:projected-history-prior} built from stored residual secant pairs and
$C_{k,r}$ the correction \eqref{eq:Jkr} obtained from the $r$ failed tests at
iteration $k$, the surrogates and the Jacobian-error bounds used in their inner models are
\begin{equation}
  \underbrace{\nabla F(v_k)}_{\text{Exact-J},\;\delta=0},\;\;
  \underbrace{\nabla F_1(v_k)}_{\text{Local-J},\;\delta=\beta},\;\;
  \underbrace{\nabla F_1(v_k)+B_k}_{\text{uncertified},\;\bar\delta=2\beta},\;\;
  \underbrace{\nabla F_1(v_k)+B_k+C_{k,r}}_{\text{certified}} ,
  \label{eq:surrogates}
\end{equation}
The uncertified surrogate is the method of Section~\ref{sec:qn} run at its a priori level $\bar\delta=2\beta$ of Proposition~\ref{prop:qn-envelope}; it is the row \textit{Residual multisecant} of Table~\ref{tab:tolerance} and the curve of the
same name in Figures~\ref{fig:convergence} and~\ref{fig:mechanism}. The certified variants are run both with $B_k=0$ and with the QN approximation built from stored residual secants and previously computed residual JVPs.  For each choice of $B_k$ we report a run with the fixed value $\bar\delta=2\beta$ and a run with the step-dependent value $\delta^{\rm dir}_k$ selected by the directional procedure.  Extragradient on $F$ is the first-order baseline.

The level $\delta$ enters a method twice, and the two roles are kept apart
throughout: it is the Jacobian-error bound declared to the inner model
\eqref{eq:app-model}, where it scales the regularizer and so fixes the length of
the trial step, and it sets the dual step $\rho_k=(L_1/2)\norm{s_k}$ taken once a
step is accepted.  Theorem~\ref{thm:main} requires the first to be a valid bound
and the second to be taken at the accepted step, which is what the rows at the
fixed value $\bar\delta=2\beta$ realize; the rows at $\delta^{\rm dir}_k$ declare
a level the theorem does not license and are reported as a heuristic.

Accuracy is stated in the gap \eqref{eq:gap}, the quantity the rate theorems bound: a run stops at $\Gap(\widetilde z_T)\le10^{-3}\,\Gap(z_0)$, with $\Gap(z_0)$ evaluated at the common starting point and therefore the same for every partition of a data set.

\begin{figure}[t]
\centering
\includegraphics[width=0.98\textwidth]{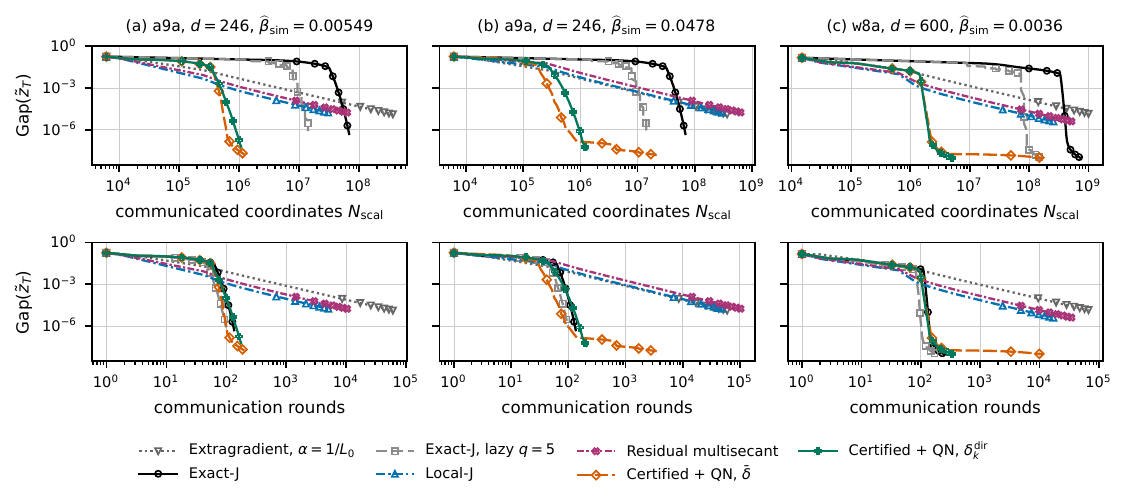}
\caption{Gap of the same runs against transmitted scalars
(top) and communication rounds (bottom), on (a) the least and (b) the most
biased shard of \texttt{a9a} and (c) \texttt{w8a}.  Only the abscissa differs
between the rows.}
\label{fig:convergence}
\end{figure}

\begin{table}[t]
\caption{Communication to reach $\Gap(\widetilde z_T)\le10^{-3}\,\Gap(z_0)$;
every method reaches it and is run until it does. Means over five
seeds on \texttt{a9a}, one partition on \texttt{w8a}; the spread over seeds is at
most 9\% for the certified rows and 34\% for the local-only baselines.}
\label{tab:tolerance}
\centering
\footnotesize
\setlength{\tabcolsep}{4pt}
\begin{tabular}{l rr rr rr}
\hline
 & \multicolumn{2}{c}{\texttt{a9a}, $h=0$} & \multicolumn{2}{c}{\texttt{a9a}, $h=1$} & \multicolumn{2}{c}{\texttt{w8a}, $h=0$}\\
Method & $T$ & $N_{\rm scal}$ & $T$ & $N_{\rm scal}$ & $T$ & $N_{\rm scal}$\\
\hline
Extragradient, $\alpha=1/L_0$ & 2\,084 & $2.56\!\cdot\!10^{7}$ & 2\,084 & $2.56\!\cdot\!10^{7}$ & 3\,068 & $9.20\!\cdot\!10^{7}$\\
Exact-J & 33.0 & $5.03\!\cdot\!10^{7}$ & 33.0 & $5.03\!\cdot\!10^{7}$ & 42.0 & $3.79\!\cdot\!10^{8}$\\
Exact-J, lazy $q=5$ & 33.0 & $1.10\!\cdot\!10^{7}$ & 33.0 & $1.10\!\cdot\!10^{7}$ & 42.0 & $8.23\!\cdot\!10^{7}$\\
Local-J & 245 & $3.01\!\cdot\!10^{6}$ & 2\,295 & $2.82\!\cdot\!10^{7}$ & 247 & $7.41\!\cdot\!10^{6}$\\
Residual multisecant & 483 & $5.95\!\cdot\!10^{6}$ & 4\,875 & $6.00\!\cdot\!10^{7}$ & 482 & $1.45\!\cdot\!10^{7}$\\
Certified, $\bar\delta$ & 25.8 & $5.17\!\cdot\!10^{5}$ & 15.0 & $3.53\!\cdot\!10^{5}$ & 37.0 & $1.77\!\cdot\!10^{6}$\\
Certified $+$ QN, $\bar\delta$ & 25.8 & $5.10\!\cdot\!10^{5}$ & 15.0 & $3.46\!\cdot\!10^{5}$ & 37.0 & $1.77\!\cdot\!10^{6}$\\
Certified, $\delta_k^{\rm dir}$ & 29.2 & $5.97\!\cdot\!10^{5}$ & 26.4 & $6.19\!\cdot\!10^{5}$ & 36.0 & $1.71\!\cdot\!10^{6}$\\
Certified $+$ QN, $\delta_k^{\rm dir}$ & 29.2 & $5.92\!\cdot\!10^{5}$ & 26.2 & $6.10\!\cdot\!10^{5}$ & 36.0 & $1.71\!\cdot\!10^{6}$\\
\hline
\end{tabular}

\end{table}

\paragraph{Communication, and the two cost axes.}
\label{sec:comm}
On \texttt{a9a} at the least biased shard, the certified method with the QN approximation built from stored residual secants and previously computed residual JVPs reaches the tolerance with $5.10\cdot10^{5}$ transmitted scalar coordinates in 25.8 outer iterations.  This is a factor 99 below Exact-J and 5.9 below Local-J in communication volume; it also uses fewer synchronizations than Exact-J, 83 against 99.  On \texttt{w8a}, the corresponding factor relative to Exact-J is 214.  The measured ratio $Q_T/T$ remains small compared with $d$, as required by Corollary~\ref{cor:bandwidth-factor}: 1.22 on \texttt{a9a} and 1.19 on \texttt{w8a}.

The two margins are of different sizes and should be read separately: against Exact-J the factor is two orders of magnitude and grows with $d$, while against the communication-free surrogates it is single-digit at this accuracy --- 4.2 over Local-J on \texttt{w8a} --- and reaches 82 at $h=1$, where the server shard is least representative and Local-J is paying the $\beta D^2/T$ term of Corollary~\ref{cor:local}.  The certified method is the only one of the family that is never the expensive choice in either comparison.

A full-Jacobian iteration is expensive in scalar coordinates: under the accounting of Section~\ref{sec:setup}, one Exact-J iteration transmits $M(d^2+2d)$ coordinates, whereas one first-order iteration transmits $2Md$.  Their ratio is $(d+2)/2$, equal to 124 on \texttt{a9a} and 301 on \texttt{w8a}.  This explains why a method can use few outer iterations and still communicate more data: Exact-J needs 33 outer iterations on \texttt{a9a} against 2\,084 for the first-order baseline, a ratio of 63.2, and still transmits 2.0 times its payload.  Across the three cells of Table~\ref{tab:tolerance}, at least one certified variant uses no more outer iterations than Exact-J while transmitting fewer scalar coordinates than every non-certified method shown in the table.

That certified variants also use fewer outer iterations, and on \texttt{a9a} fewer synchronizations, than Exact-J is not a statement about the Jacobian surrogate, and we do not read it as one.  The four methods do not declare the same Jacobian-error bound to the inner model \eqref{eq:app-model} --- $\delta=0$ for Exact-J against the $\bar\delta=2\beta$ or $\delta_k^{\rm dir}$ that Theorem~\ref{thm:main} requires of the certified variants --- and the declared bound is what regularizes \eqref{eq:app-model} and so fixes the length of the trial step.  To test this, we ran Exact-J with its own exact Jacobian but with the Jacobian-error level of a certified variant declared to \eqref{eq:app-model}, changing nothing else.  It then reproduces that variant's iteration count: 25.8 and 29.2 on \texttt{a9a} at $h=0$, 15.0 and 26.2 at $h=1$, and 30.2 to 51.2 across Table~\ref{tab:dimension}, against the certified variants' 25.8/29.2, 15.0/26.4 and 30.2 to 51.2 --- in place of the 33.0 and the 50.0 to 79.0 that Exact-J needs at $\delta=0$.  The iteration counts are therefore set by the declared level, not by the surrogate.  Which level is the cheaper one is then a property of the instance: swept for Exact-J alone, the outer iteration count falls over the whole range $\delta\in[0,16\beta]$ on \texttt{a9a}, is smallest at an interior level on I3 that moves to $0$ as $\beta$ grows, and is smallest at $\delta=0$ on I1, where any positive declared level stops the method inside the horizon.

\begin{figure}[t]
\centering
\includegraphics[width=0.98\textwidth]{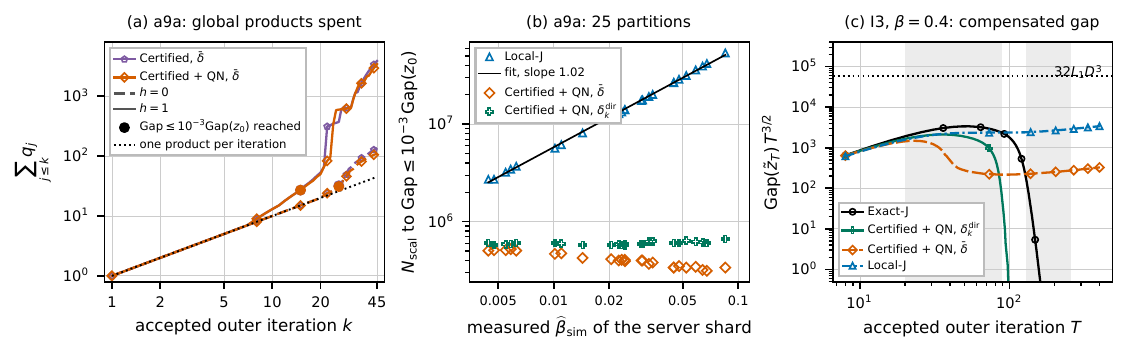}
\caption{(a) cumulative global JVPs $\sum_{j\le k}q_j$ on \texttt{a9a} for
the pair that isolates the effect of reusing stored residual secants and JVPs --- the certified method with $\bar\delta=2\beta$
with and without the QN approximation --- dashed for $h=0$ and solid for $h=1$;
filled markers are the iteration reaching $\Gap\le10^{-3}\,\Gap(z_0)$, and
the saving quoted below is read there and not at the right edge. (b) transmitted scalar coordinates to
reach $\Gap\le10^{-3}\,\Gap(z_0)$ against the \emph{measured} radius
\eqref{eq:beta-hat}, one point per partition, all 25 of them for each
method. (c) compensated gap on the rate copy of I3 at $\beta=0.4$, with
the two fitted windows shaded and each curve stopped where its residual reaches
the numerical resolution of the global JVP computations; a $T^{-3/2}$ law gives a bounded curve, a
surviving $\delta D^2/T$ term a growing one.}
\label{fig:mechanism}
\end{figure}

\paragraph{The rate, and where the claim ends.}
Compensating the measured gap by $T^{3/2}$ separates the two bounds: the $\delta
D^2/T$ term of Theorem~\ref{thm:known-uniform} is attained --- Local-J and the
certified method with the fixed value $\bar\delta=2\beta$ have exponents 1.23 and
1.20, both below $3/2$.  Instance~I1
delimits the claim: there the residual-Jacobian error is full rank and poorly aligned with
the directions visited, full-Jacobian communication is beneficial in iteration count --- Exact-J needs 11 outer iterations against 2\,432
for the baseline --- and this is an instance where replacing the full Jacobian by sequential JVP corrections is not communication-efficient: the method reaches the worst-case JVP count.  The certificate is most effective when the Jacobian-approximation error is well represented by the directions encountered by the inner solves.

\section{Limitations}
\label{sec:discussion}

Several limitations remain.  \emph{(i)} The method needs $\beta$ and $L_1$;
underestimating $L_1$ only costs extra products, but underestimating $\beta$
breaks the admissibility of the inner model.  \emph{(ii)} The guaranteed bound
$\norm{H_k}_{\op}\le2\beta$ is, in the worst case, no better than the baseline
value $\beta$; the quasi-Newton model helps only when the stored secants are
informative, and the certificate is what turns this into a provable gain.
\emph{(iii)} We assume full participation and exact products; partial
participation would need a stochastic analysis.  \emph{(iv)} The certificate
products are sequential, so the volume can be far below that of a full Jacobian
while the number of rounds is higher.  \emph{(v)} As is standard, one
approximately solved subproblem counts as one iteration; on a constrained set
that subproblem carries the projection and may dominate the running time.
\emph{(vi)} The bound of Theorem~\ref{thm:instance} ignores how candidate
directions align with the dominant singular directions of $H_k$.  \emph{(vii)}
Reusing old secants and products reduces communication on average but not at
every iteration, which is why we report accuracy and communication together.
Finally, the experiments cover two LIBSVM and two synthetic instances; a
broader claim would need more task families and measured wall-clock time.

\section{Conclusion}
\label{sec:conclusion}

We placed a second-order method for distributed variational inequalities
between the two standard options.  It is cheaper than an optimal first-order
method, because similarity replaces $L_0$ by $\beta$ at no extra communication
and the residual model is built from secants already sent.  It is far cheaper
than an exact second-order method, because the certified version attains the
same $O(L_1D^3T^{-3/2})$ rate while transmitting vectors of length $d$ rather
than a $d\times d$ matrix.

One idea does this work.  A uniform bound on the Jacobian error is a statement
about a matrix, and the appendix shows that reaching it needs $d$ products in
the worst case; accuracy along one direction is a statement about a vector, is
all the theory uses, and one product verifies it.  Two directions remain open:
removing the known constants and allowing partial participation, and proving a
lower bound on the curvature information a distributed method must
communicate as a function of $\beta$, $d$, and the network, against which the
count of Theorem~\ref{thm:communication} could be judged.

\section*{Acknowledgments}

The research was supported by Russian Science Foundation (project
No.~23-11-00229), \url{https://rscf.ru/en/project/23-11-00229/}.




\clearpage
\appendix

\section{Mapping to the VIJI directional theorem}
\label{app:mapping}

For completeness, we record the exact inner-model assumptions used from Agafonov et al.~\cite{agafonov2024}.  At a trial point $v$, let a surrogate $J$ satisfy a known Jacobian-error bound
\begin{equation}
  \norm{\nabla F(v)-J}_{\op}\le\delta.
  \label{eq:app-global-envelope}
\end{equation}
The regularized VIJI model is
\begin{equation}
  \Omega_{v,J,\delta}(x)
  :=F(v)+J(x-v)
   +\bigl(10\delta+5L_1\norm{x-v}\bigr)(x-v),
  \label{eq:app-model}
\end{equation}
and an approximate inner solution satisfies the corresponding VIJI inner certificate
\begin{equation}
  \sup_{z\in\Z}
  \ip{\Omega_{v,J,\delta}(x)}{x-z}
  \le
  \frac{L_1}{2}\norm{x-v}^3
  +\delta\norm{x-v}^2.
  \label{eq:app-inner}
\end{equation}

For Algorithm~\ref{alg:certified-refinement},
\[
  \nabla F(v_k)-J_{k,r}
  =H_k(I-P_{k,r}).
\]
For the projected approximation, \eqref{eq:uniform-envelope} remains valid for every refinement trial because
\[
  \norm{H_k(I-P_{k,r})}_{\op}
  \le\norm{H_k}_{\op}
  \le\bar\delta=2\beta.
\]
Thus one fixed VIJI inexactness parameter is valid throughout the whole refinement loop.  This is the $\delta$ entering the regularized model (10) and inner certificate (12) of Agafonov et al.; it is not their adaptive dual-step sequence $\beta_k$.  To avoid collision with our similarity radius $\beta$, we denote that source parameter by $\rho_k$.  After a candidate is accepted we set
\[
  \rho_k=\frac{L_1}{2}\norm{s_k},
\]
exactly as in their Theorem~3.3.
The accepted candidate additionally satisfies
\[
  \norm{(\nabla F(v_k)-J_{k,r})s_{k,r}}
  \le\frac{L_1}{2}\norm{s_{k,r}}^2,
\]
which is exactly the directional hypothesis of Agafonov et al.~\cite[Theorem~3.3]{agafonov2024}.  Thus the uniform Jacobian-error bound guarantees an admissible inner model, while the accepted directional certificate together with $\rho_k=\tfrac{L_1}{2}\norm{s_k}$ invokes their Theorem~3.3 and yields the exact $T^{-3/2}$ outer rate.

\section{Proofs of the main results}
\label{app:proofs}

\subsection{Projected residual quasi-Newton approximation}

\proof[Proof of Proposition~\ref{prop:qn-envelope}]
By \eqref{eq:beta}, $\norm{R_k}_{\op}\le\beta$, hence $R_k\in\mathcal B_\beta$. By definition of metric projection, $B_k\in\mathcal B_\beta$ as well. Therefore
\[
  \norm{R_k-B_k}_{\op}
  \le\norm{R_k}_{\op}+\norm{B_k}_{\op}
  \le2\beta.
\]
Since $\mathcal B_\beta$ is closed and convex and contains $R_k$, the metric projection of $\widehat B_k$ onto $\mathcal B_\beta$ cannot increase its Frobenius distance to $R_k$:
\[
  \norm{R_k-B_k}_{\mathrm F}
  \le
  \norm{R_k-\widehat B_k}_{\mathrm F}.
\]
This proves \eqref{eq:projected-residual-error} and \eqref{eq:projection-nonexpansive}. \qed

\subsection{Proofs for stored residual secants and JVPs}

\proof[Proof of Theorem~\ref{thm:recycled-secants}]
Let $P=P_S=SS^\dagger$ and $\widehat B=YS^\dagger$.  Since $P$ is the orthogonal projector onto $\operatorname{range}(S)$,
\[
  R_k-\widehat B
  =R_k(I-P)+R_kP-YS^\dagger
  =R_k(I-P)+(R_kS-Y)S^\dagger,
\]
which proves \eqref{eq:history-decomposition}.  The first summand has right support in $\operatorname{range}(S)^\perp$, whereas the second satisfies
\[
  (R_kS-Y)S^\dagger=(R_kS-Y)S^\dagger P.
\]
Hence their Frobenius inner product is zero, and \eqref{eq:history-pythagoras} follows.

For the $j$th secant, the fundamental theorem of calculus gives
\[
  y_j
  =\int_0^1\nabla G(a_j+t s_j)s_j\,dt.
\]
Thus
\[
  R_ks_j-y_j
  =\int_0^1
  \bigl[R_k-\nabla G(a_j+t s_j)\bigr]s_j\,dt.
\]
Using \eqref{eq:residual-jac-lip},
\[
\begin{split}
  \norm{R_ks_j-y_j}
  &\le
  L_G\int_0^1
  \norm{v_k-a_j-t s_j}\,\norm{s_j}\,dt\\
  &\le
  L_G\left(\norm{v_k-a_j}+\frac12\norm{s_j}\right)\norm{s_j}.
\end{split}
\]
Summing squared column bounds yields
\[
  \norm{R_kS-Y}_{\mathrm F}^2
  \le
  L_G^2\sum_{j=1}^{m}
  \left(\norm{v_k-a_j}+\frac12\norm{s_j}\right)^2\norm{s_j}^2.
\]
Finally,
\[
  \norm{(R_kS-Y)S^\dagger}_{\mathrm F}
  \le
  \norm{R_kS-Y}_{\mathrm F}\norm{S^\dagger}_{\op}.
\]
Projection onto the closed convex set $\mathcal B_\beta$ cannot increase the Frobenius distance to $R_k\in\mathcal B_\beta$. Together with \eqref{eq:projection-nonexpansive}, the preceding estimate gives \eqref{eq:history-bound}. \qed

\proof[Proof of Corollary~\ref{cor:recycled-jvp}]
Let $P_U=UU^{\mathsf T}$ and first consider $\widehat B_k^{\rm jvp}$.  Since the columns of $U$ are orthonormal,
\[
  \widehat B_k^{\rm jvp}P_U=\widehat B_k^{\rm jvp},
  \qquad
  (R_k-\widehat B_k^{\rm jvp})(I-P_U)=R_k(I-P_U).
\]
On the current correction subspace,
\[
  (R_k-\widehat B_k^{\rm jvp})U
  =\bigl[(R_k-R(v_1))u_1,\ldots,(R_k-R(v_m))u_m\bigr].
\]
The two domain components are orthogonal, and therefore
\[
\begin{split}
  \norm{R_k-\widehat B_k^{\rm jvp}}_{\mathrm F}^2
  &=\norm{R_k(I-P_U)}_{\mathrm F}^2
  +\sum_{j=1}^{m}\norm{(R_k-R(v_j))u_j}^2\\
  &\le\norm{R_k(I-P_U)}_{\mathrm F}^2
  +L_G^2\sum_{j=1}^{m}\norm{v_k-v_j}^2.
\end{split}
\]
Projection onto $\mathcal B_\beta$ cannot increase the Frobenius distance because $R_k\in\mathcal B_\beta$. This proves \eqref{eq:recycled-jvp-bound}. \qed

\subsection{Proof of the uncertified local-Jacobian + QN rate}

\proof[Proof of Proposition~\ref{prop:combined-rate}]
By \eqref{eq:combined-qn} and \eqref{eq:Hk},
\[
  \nabla F(v_k)-J_k^{\rm QN}=R_k-B_k=H_k.
\]
Therefore the hypothesis $\norm{H_k}_{\op}\le\delta_{\rm QN}$ is exactly the uniform inexact-Jacobian hypothesis in Theorem~\ref{thm:known-uniform}.  Using the corresponding uniform-theorem outer parameter $\rho_k=\delta_{\rm QN}$ and substituting $\delta=\delta_{\rm QN}$ into \eqref{eq:known-rate} gives \eqref{eq:combined-rate}.  Proposition~\ref{prop:qn-envelope} gives the concrete projected residual-QN specialization $\delta_{\rm QN}\le2\beta$. \qed

\subsection{Refinement invariant and finite termination}

Fix $k$ and write $H=H_k$.  After $r$ failed tests, let $U_r\in\R^{d\times r}$ have orthonormal columns and set
\begin{equation}
  P_r:=U_rU_r^{\mathsf T},
  \qquad
  C_r:=HP_r.
  \label{eq:app-projector}
\end{equation}
Then
\begin{equation}
  \nabla F(v_k)-J_{k,r}
  =H(I-P_r),
  \label{eq:app-invariant}
\end{equation}
which proves \eqref{eq:refinement-invariant}.  In particular,
\[
  \norm{H(I-P_r)}_{\op}\le\norm{H}_{\op},
\]
so the initial Jacobian-error bound remains valid through all refinement trials.

For a candidate step $s_r$, the global JVP gives
\[
  e_r=H(I-P_r)s_r.
\]
If the test fails, define
\[
  w_r=(I-P_r)s_r,
  \qquad
  u_{r+1}=\frac{w_r}{\norm{w_r}}.
\]
A failure implies $w_r\neq0$.  Moreover,
\begin{equation}
  Hu_{r+1}=\frac{e_r}{\norm{w_r}}.
  \label{eq:app-new-column}
\end{equation}
Therefore the failed JVP gives the exact product $H_ku$ in a new direction, and
\[
  C_{r+1}=C_r+(Hu_{r+1})u_{r+1}^{\mathsf T}
  =HP_{r+1}.
\]
The vectors $u_1,u_2,\ldots$ are orthonormal.  Hence at most $d$ failures are possible.  If $d$ tests fail, $U_d$ is an orthonormal basis of $\R^d$, so $P_d=I$ and \eqref{eq:app-invariant} becomes
\[
  \nabla F(v_k)-J_{k,d}=0.
\]
Thus the next inner solution is exact without another JVP.  Every accepted step satisfies \eqref{eq:directional-condition}, and Algorithm~\ref{alg:certified-refinement} sets $\rho_k=\tfrac{L_1}{2}\norm{s_k}$ after acceptance.  Therefore Theorem~\ref{thm:known-uniform} (the directional part, Agafonov et al., Theorem~3.3) gives \eqref{eq:main-rate}.  This proves Theorem~\ref{thm:main}. \qed

\subsection{Proof of Theorem~\ref{thm:instance}}

For each failed trial $r$, \eqref{eq:app-new-column} and strict failure of the certificate imply
\[
  \norm{H_ku_{k,r+1}}
  =\frac{\norm{e_{k,r}}}{\norm{w_{k,r}}}
  >\frac{L_1\norm{s_{k,r}}^2}{2\norm{w_{k,r}}}
  \ge\frac{L_1}{2}\norm{s_{k,r}},
\]
because $\norm{w_{k,r}}\le\norm{s_{k,r}}$.  Squaring and summing over failed trials gives
\[
  \frac{L_1^2}{4}
  \sum_{r\in\mathcal F_k}\norm{s_{k,r}}^2
  <
  \sum_{r\in\mathcal F_k}\norm{H_ku_{k,r+1}}^2.
\]
The failed directions are orthonormal, hence
\[
  \sum_{r\in\mathcal F_k}\norm{H_ku_{k,r+1}}^2
  \le\norm{H_k}_{\mathrm F}^2.
\]
This proves the Frobenius-norm inequality \eqref{eq:energy-budget}.

If $\mathcal F_k\neq\varnothing$, then
\[
  |\mathcal F_k|\tau_k^2
  \le\sum_{r\in\mathcal F_k}\norm{s_{k,r}}^2
  <\frac{4}{L_1^2}\norm{H_k}_{\mathrm F}^2.
\]
If fewer than $d$ tests fail, one final successful query gives $q_k=|\mathcal F_k|+1$; if $d$ tests fail, the full space has been learned and $q_k=d$.  Combining the cases gives \eqref{eq:q-bound}. \qed

\proof[Proof of Corollary~\ref{cor:history-jvp}]
Apply \eqref{eq:q-bound} with $H_k=R_k-B_k$ and substitute \eqref{eq:history-bound}. \qed

\subsection{Proof of Theorem~\ref{thm:communication}}

Each accepted VIJI iteration uses two global operator reductions and $q_k$ sequential JVP reductions.  This gives \eqref{eq:cert-comm}--\eqref{eq:cert-oracles}.  Theorem~\ref{thm:main} gives $q_k\le d$, hence $Q_T\le dT$.  If $\mathcal F_k\neq\varnothing$, Theorem~\ref{thm:instance} gives
\[
  q_k\le1+\frac{4\norm{H_k}_{\mathrm F}^2}{L_1^2\tau_k^2},
\]
whereas $q_k\le1$ if there is no failed test.  Summation gives \eqref{eq:pathwise-Q}.  The bound based on stored secants follows from \eqref{eq:history-bound}.

Exact-J uses the same two vector reductions and one matrix reduction per outer iteration, yielding \eqref{eq:exact-comm}.  Finally,
\[
  Md(2T+Q_T)
  \le Md(2T+dT)
  =M(2d+d^2)T,
\]
which proves \eqref{eq:payload-domination}. \qed

\proof[Proof of Corollary~\ref{cor:bandwidth-factor}]
For a common horizon $T$,
\[
  N_{\rm scal}^{\rm cert}=Md(2T+Q_T),
  \qquad
  N_{\rm scal}^{\rm exact}=Md(d+2)T.
\]
Division gives \eqref{eq:bandwidth-factor}. \qed

\section{A worst-case obstruction for uniform matrix reconstruction}
\label{app:lower}

The certified method requires accuracy only in the accepted step direction rather than a uniformly accurate Jacobian matrix.  The following elementary lower bound explains why this distinction is necessary without additional spectral assumptions.

\begin{teo}[Deterministic JVP obstruction]
\label{thm:lower}
Let an unknown matrix $A\in\R^{d\times d}$ satisfy $\norm{A}_{\op}\le\Delta$.  Any deterministic adaptive procedure that makes $q<d$ JVP queries and returns $\widehat A$ has a consistent instance for which
\[
  \norm{A-\widehat A}_{\op}\ge\Delta.
\]
\end{teo}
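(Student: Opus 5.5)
The argument is an adversary argument in the style of resisting oracles. I will use the zero answer strategy. The adversary answers every query with the zero vector, $A s_j := 0$. Since the procedure is deterministic and adaptive, its query sequence $s_1,\dots,s_q$ is then completely determined: each $s_{j+1}$ is a fixed function of the earlier answers, which are all zero. The returned matrix $\widehat A$ is also fixed. So the whole transcript is one specific, instance-independent object. Any matrix $A$ with $\norm{A}_{\op}\le\Delta$ and $As_j=0$ for all $j$ is consistent with that transcript, and the procedure must return the same $\widehat A$ on all of them.

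The key step is to find two consistent instances that are far apart. Let $V=\operatorname{span}\{s_1,\dots,s_q\}$. Since $q<d$, $\dim V\le q<d$, so there is a unit vector $u\perp V$. Define $A_\pm:=\pm\Delta\,uu^{\mathsf T}$. Both satisfy $\norm{A_\pm}_{\op}=\Delta$, and both give $A_\pm s_j=\pm\Delta\,u(u^{\mathsf T}s_j)=0$. So both are consistent with the zero answers and produce the same $\widehat A$. The triangle inequality then gives
\[
  \norm{A_+-\widehat A}_{\op}+\norm{A_--\widehat A}_{\op}\ge\norm{A_+-A_-}_{\op}=2\Delta .
\]
Hence one of the two instances has $\norm{A-\widehat A}_{\op}\ge\Delta$, which is the claim. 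The third candidate, $A=0$, is also consistent, but it is not needed.

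The only subtle point is making the adaptivity rigorous. The query directions must be fixed before the instance is chosen, and this is exactly why the adversary answers zero and picks $u$ only after seeing all queries. I would state this as a short induction on $j$: along the zero-answer path, $s_{j+1}$ depends only on $s_1,\dots,s_j$ and on zero responses. A JVP of the form $\nabla G(v)s$ is modeled as a query $As$; if adjoint queries were allowed, I would also require $u\perp$ the adjoint query vectors, but the statement concerns only JVPs. I do not expect any other step to be an obstacle.
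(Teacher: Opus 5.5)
Your proof is correct and follows the paper's argument: answer every query with zero, choose a unit vector orthogonal to the resulting fixed query span, and compare the two consistent instances $\pm\Delta$ times a rank-one matrix, which lie at operator-norm distance $2\Delta$ from each other. The only difference is cosmetic: you take $uu^{\mathsf T}$ where the paper takes $aw^{\mathsf T}$ with an arbitrary unit vector $a$, and your choice shows in addition that the obstruction persists when the unknown is symmetric.
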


\proof
Run the procedure on the all-zero reply transcript.  The resulting query vectors span a proper subspace, so choose a unit vector $w$ orthogonal to it and any unit vector $a$.  Both matrices
\[
  A_+=\Delta aw^{\mathsf T},
  \qquad
  A_-=-\Delta aw^{\mathsf T}
\]
produce the same transcript.  Their distance is $2\Delta$, so at least one is at distance at least $\Delta$ from the common output $\widehat A$. \qed

The construction embeds into a strongly monotone distributed linear VI by adding a sufficiently large multiple of the identity to every local operator and distributing the rank-one residual among the nonserver workers.  Thus the obstruction is not an artifact of an unconstrained matrix-recovery problem.  It concerns uniform operator-norm reconstruction; it does not prevent the directional certification used in Theorem~\ref{thm:main}.

\section{Additional numerical certificate verification}
\label{app:reproducibility}

\paragraph{The main instance in full.}
Node~$i$ contributes the operator of the average over its shard with the weight
$w_i=N_i/N$ of \eqref{eq:finite-sum}; the shards are equal-sized and the features
are normalized to $\norm{a_j}\le1$ before the split, so the displayed problem is
the same for every partition and only its decomposition changes.  The symmetric
part of the Jacobian is block diagonal, its $r$ block being
$\gamma I-\bar c\,ww^{\mathsf T}$ with $\bar c=\overline{\ell''}\le\tfrac14$, so
$\gamma=R_w^2/4$ is the smallest value for which the operator is monotone, and
that is the value taken.  With $\lambda=0$ the $w$ block is
$\tfrac1n\sum_jc_j(a_j+r)(a_j+r)^{\mathsf T}$ with $c_j>0$, which inherits the
rank deficiency of the design matrix --- rank 108 of $123$ on
\texttt{a9a} and 266 of $300$ on \texttt{w8a} --- so the symmetric part
has a kernel of dimension 14 and 33 respectively at every point
of $\Z$ and the modulus of strong monotonicity is $0$ rather than small.  The
heterogeneity family below moves the covariance structure of the shards and not
their label composition: that stays within 0.240--0.241.  We use
$h\in\{0,\tfrac14,\tfrac12,\tfrac34,1\}$ with five seeds on \texttt{a9a},
25 partitions in all; \texttt{w8a} enters the study for its dimension and
not for its heterogeneity and is run at $h=0$.

\paragraph{The auxiliary instances.}
Two further instances isolate the extreme regimes.  Instance~I1 is the
cubic-regularized bilinear saddle point
$\min_x\max_y\,y^{\mathsf T}(Ax-b)+\tfrac{\rho}{6}\norm{x}^3$ with
$\rho=10^{-3}$, $d=100$ and $A$ upper bidiagonal, whose nonzeros are split over
$M=8$ workers, so that the server holds a $1/M$ fraction of the coupling and
$\beta=9.95$, about five times $\norm{A}_{\op}=2.00$: the worst case
for a residual surrogate, since the local Jacobian is then further from the global
Jacobian than the zero matrix is.  Instance~I3, used wherever
$\beta$ or $d$ has to be varied, is the synthetic benchmark with controlled similarity
$F(z)=Az+\rho z^{\odot3}$ on $[-1,1]^d$ with $F_1=F-G$, $F_i=F+G/(M-1)$ and
$G(z)=aE\tanh(z/a)$, $\norm{E}_{\op}=\beta$; the average is exactly $F$, so
$\beta$ changes the server Jacobian and nothing else.  It is swept over
$\beta\in\{0.02, 0.08, 0.2, 0.4, 0.8\}$ at $d=18$ and over $d\in\{18, 40, 80, 160, 320\}$ at
$\beta=0.2$.  The rate study below uses a copy of I3 with the same
algebraic form and a different conditioning, stated there, since every constant
of the rate plot depends on it.

\paragraph{The methods in full.}
In \eqref{eq:surrogates} the approximation $B_k=\Pi^{\mathrm F}_{\mathcal
B_\beta}(Y_kS_k^\dagger)$ is computed from residual secants, with
$S_k^\dagger$ evaluated by a rank-revealing decomposition and the singular values
clipped at $\beta$ as in \eqref{eq:projected-history-prior}; the pairs of $G$ \eqref{eq:G}
require no additional operator reductions because both operator values have already been computed by the outer method.  Exact-J lazy refreshes
$\nabla F(v_k)$ every fifth iteration.  The two certified methods differ only in $B_k$: \textit{Certified} uses $B_k=0$, whereas \textit{Certified+QN} also stores the pairs $(u_{j,l},R_ju_{j,l})$ already computed by earlier certificate tests, as Corollary~\ref{cor:recycled-jvp} permits.  The memory is
$m=8$ on I1 and I3 and $m=20$ on the real data. Pairs enter the memory in order and the surrogate is built from the $m$ most recent ones of each kind, the older columns being discarded; residual secants and stored JVP pairs are kept in separate buffers, each trimmed to $3m$ columns. In \eqref{eq:projected-history-prior} the pseudo-inverse $S_k^\dagger$ is taken by singular value decomposition of the column-normalized $S_k$ with the singular values below $10^{-6}\sigma_{\max}(S_k)$ discarded, and the resulting $Y_kS_k^\dagger$ is projected onto $\{\norm{B}_{\op}\le\beta\}$ by clipping its singular values.  The inner subproblem is solved by extragradient on the model operator of \eqref{eq:app-model} at the step $1/L_\Omega$, with $L_\Omega=\norm{J_k}_{\op}+\delta+10L_1D$, warm-started at the previous iterate and stopped as soon as the certificate \eqref{eq:app-inner} holds, at most 600 iterations; a run that fails the certificate doubles its budget up to the cap.  The code that produces every number and figure of this section is at
\url{https://github.com/mojaevr/VI-Similarity-vs-QN-RJND-}. Where the a priori bound $\bar\delta=2\beta$ caps the bisection that defines $\delta_k^{\rm dir}$, the two regularization choices coincide and the two runs are the same method; on \texttt{a9a} that
happens on 53\% to 56\% of the iterations up to the tolerance at
$h=0$ and on none at all at $h\ge\tfrac12$, and the median ratio
$\delta_k^{\rm dir}/\bar\delta$ over the certified runs is 0.07.  The
separation between the two regularization choices of Section~\ref{sec:instances} is
therefore a property of the uncapped iterations, and on the least biased shard it rests on fewer than half of
them.

\paragraph{The step-dependent level.}
The level $\delta_k^{\rm dir}$ is the smallest model level that its own inner solution certifies: writing $x_\delta$ for the inner solution of
\eqref{eq:app-model} at level $\delta$, it is the fixed point of
\[
  \delta=\tfrac{L_1}{2}\norm{x_\delta-v_k},
  \qquad \delta\in[0,\bar\delta],
\]
truncated at the a priori envelope $\bar\delta=2\beta$.  It is located by ten warm-started bisection steps on $[0,\bar\delta]$, each step solving \eqref{eq:app-model} at the midpoint; the search returns the upper bracket, so the reported level satisfies the directional scale with a slack of at most $\bar\delta/2^{10}$.  Every solve in the search is server-local and touches no counter, so the search costs inner iterations and no communication.

\paragraph{Constants, and what they are estimates of.}
On the LIBSVM instances $L_1$ is the largest ratio
$\norm{\nabla F(z)-\nabla F(z')}_{\op}/\norm{z-z'}$ observed on a fixed probe
set, 0.349 on \texttt{a9a} and 0.281 on \texttt{w8a}; on I1 and I3 it is
the analytic constant of the cubic term.  On the main instance the similarity
radius is measured on the partition,
\begin{equation}
  \widehat\beta_{\rm sim}
  =\max_{z\in\mathcal T}\norm{\nabla F_1(z)-\nabla F(z)}_{\op},
  \label{eq:beta-hat}
\end{equation}
at six checkpoints $\mathcal T$ on \texttt{a9a} and five on
\texttt{w8a} taken by exact singular value decomposition along one high-accuracy
reference trajectory common to all partitions of that data set, so that it is a
trajectory-restricted lower estimate of the $\beta$ of \eqref{eq:beta} and is
comparable across partitions.  Both $L_1$ and $\widehat\beta_{\rm sim}$ are
therefore lower estimates on the LIBSVM instances and bounds on the synthetic
ones, so there the uniform Jacobian-error bound required by
Theorem~\ref{thm:known-uniform} is assumed rather than certified.
Underestimating $L_1$ makes the acceptance test
\eqref{eq:directional-condition} stricter and can only increase the reported
JVP counts.  The residual scale is fixed at the scale of the operator,
$\eta=1$ on the LIBSVM instances, $1/L_0$ on I1 and $1/\norm{A}_{\op}$ on I3;
the same scale is used for every method and therefore cannot decide a comparison.

\paragraph{The heterogeneity family.}
On the main instance the shards are class-balanced feature shards: for each class
the objects are sorted along its leading principal direction and cut into $M$
contiguous shards, assigned at random, and $h\in[0,1]$ interpolates, each client
keeping a random fraction $h$ of its shard while the rest is pooled and
redistributed.  Every object is used exactly once and the local sample sizes stay
equal, so $h$ moves only the covariance structure of the shards.

\paragraph{Global JVP counts and reuse of previously computed products.}
Panel~(a) of Figure~\ref{fig:mechanism} shows how many global JVPs the certified
method uses.  Up to the target accuracy the count is close to one global
JVP per accepted iteration: 1.22 on \texttt{a9a} at $h=0$ and
1.76 at $h=1$ with $\bar\delta=2\beta$, and exactly 1.00 at every
dimension of the I3 sweep with the step-dependent choice $\delta_k^{\rm dir}$, which is the choice that
reaches the tolerance there; in the latter the first candidate is accepted at
every iteration up to the tolerance.  The steep growth afterwards is the
double-precision regime recorded below.  This is the content of
Theorem~\ref{thm:instance}: the bound depends on $\|H_k\|_{\mathrm F}$
and the length of the failed steps, not directly by $d$. Since the certified method solves \eqref{eq:app-model} once per candidate step, the mean number of inner solves per accepted iteration is $1+Q_T/T$: 2.22 on \texttt{a9a} at $h=0$ and 2.76 at $h=1$ with $\bar\delta=2\beta$, and 2.00 at every dimension of the I3 sweep with $\delta_k^{\rm dir}$. These solves are server-local and are the computational overhead the certificate trades for communication; wall-clock is not measured here (Section~\ref{sec:discussion}). Reusing previously computed products in the next
QN approximation, as Theorem~\ref{thm:recycled-secants} allows, saves, at the tolerance and
between two runs differing only in whether previously computed residual JVPs are reused, 25\% to 51\% of the
products on I3 as $\beta$ grows --- measured with the step-dependent choice $\delta_k^{\rm dir}$, the only
one that reaches the tolerance there --- and between 3\% and 4\% on \texttt{a9a} with
$\bar\delta=2\beta$, the pair of Figure~\ref{fig:mechanism}(a), where most additional JVPs occur in the last iterations and secant/JVP information collected at earlier points is less accurate at the current point.

\paragraph{The rate.}
Theorem~\ref{thm:main} states a $T^{-3/2}$ bound on the gap and
Corollary~\ref{cor:local} a bound with an additional $\beta D^2/T$ term.
Compensating the measured gap separates them: $\Gap(\tilde z_T)\,T^{3/2}$ is a
bounded curve under the first and grows like $T^{1/2}$ under the second.  We
evaluate $\sup_{u\in\Z}\ip{F(\tilde z_T)}{\tilde z_T-u}$ in closed form from the
support function of $\Z$; for monotone $F$ it upper bounds the Minty gap
\eqref{eq:gap}, so a decay verified for it holds a fortiori.  Panel~(c) of
Figure~\ref{fig:mechanism} reports it on a copy of I3 with $d=18$ and
$\rho=0.5$, hence $L_1=3.0$, $D=8.49$ and a compensated
ceiling $32L_1D^3=58\,650$, and with the eigenvalues of the symmetric part
of $A$ spread over $[0.05,2.0]$.  Its modulus of strong
monotonicity is 16 times smaller than that of the sweep instance,
which is why the rate is read here: a large modulus forces linear convergence,
and an exponent fitted in that regime would measure the modulus and not the
$T^{-3/2}$ law that the theorems, which assume only monotonicity, assert.
Exponents are least-squares slopes of $\log\Gap$ against $\log T$, fitted on the
largest range in which the inner certificate \eqref{eq:app-inner} holds at every
iteration of every replica; past it the tolerance the certificate requires falls
below the accuracy at which the inner gap can be evaluated in double precision,
and an exponent read there would be an exponent of the rounding error.  The two
families reach that point at very different horizons --- Exact-J and the
certified method with the step-dependent choice $\delta_k^{\rm dir}$ drive the residual to
$1.6\cdot10^{-8}$ and $4.3\cdot10^{-7}$, while Local-J and the certified method at
the fixed value $\bar\delta=2\beta$ are still at 0.014 and $1.4\cdot10^{-3}$ after the full
$T=400$ --- so the first pair is fitted on $T\in[20,89]$ and
the second on $T\in[128,257]$, the shaded ranges of the panel, and
exponents from the two windows are not compared.  Local-J and the certified
method with $\bar\delta=2\beta$ attain their largest compensated gap at the right
end of their window in every replica --- the curves are still rising at
$T=257$ --- with measured exponents 1.23 and 1.20, both
below $3/2$.  The $\delta D^2/T$ term of Theorem~\ref{thm:known-uniform} is
therefore attained and is not an artefact of the analysis: for Local-J this is
Corollary~\ref{cor:local} realized, and for the certified method it shows that
accepted steps satisfying \eqref{eq:directional-condition} do not by themselves
remove the term.  It disappears when $\delta_k$ is itself taken at the
step-dependent choice $\delta_k^{\rm dir}$: on $T\in[20,89]$ that curve peaks at
$T\approx35$, where the step is still capped, and has fallen by a
factor 45 by the end of the window, against 1.45 for
Exact-J on the same window; both stay below $32L_1D^3$ by a factor of at least
17 throughout, so both are already faster than the guarantee.  That
value is below the a priori bound $\bar\delta=2\beta$ and therefore outside the hypotheses of
Theorem~\ref{thm:known-uniform}, so this is an empirical finding and not an
instance of the theorem: what it locates is exactly the gap a sharper analysis
would have to close.  What the panel establishes is the qualitative separation
between the two regularization choices, and nothing finer.

\paragraph{Bandwidth factor across dimensions.}
Corollary~\ref{cor:bandwidth-factor} is an identity of the accounting model once
both methods perform the same number of accepted iterations, so what a sweep can
measure is its hypothesis: the measured ratio $Q_T/T$, the only quantity in
$(2+Q_T/T)/(d+2)$ that the algorithm controls.  Table~\ref{tab:dimension} reports
it on I3 at $\beta=0.2$ over a factor 18 in $d$, together with
the end-to-end communication-volume ratio at a fixed accuracy, which also carries the ratio of
the iteration counts.

\begin{table}[t]
\caption{Instance~I3 at $\beta=0.2$: the measured global-JVP count
per accepted iteration, and the communication volume to reach
$\Gap(\widetilde z_T)\le10^{-3}\,\Gap(z_0)$ relative to Exact-J, as $d$ grows by a factor 18. The certified method is the one using the step-dependent choice $\delta_k^{\rm dir}$; means over five
replicas.}
\label{tab:dimension}
\centering
\small
\begin{tabular*}{\textwidth}{@{\extracolsep{\fill}}r rr r rr}
\hline
$d$ & \multicolumn{2}{c}{$T$ to tolerance} & $Q_T/T$ & \multicolumn{2}{c}{payload ratio}\\
 & certified & Exact-J & & realized & $(2{+}Q_T/T)/(d{+}2)$\\
\hline
18 & 30.2 & 50.0 & 1.000 & $9.1\!\cdot\!10^{-2}$ & $1.5\!\cdot\!10^{-1}$\\
40 & 33.6 & 55.4 & 1.000 & $4.3\!\cdot\!10^{-2}$ & $7.1\!\cdot\!10^{-2}$\\
80 & 38.8 & 62.2 & 1.000 & $2.3\!\cdot\!10^{-2}$ & $3.7\!\cdot\!10^{-2}$\\
160 & 44.8 & 71.0 & 1.000 & $1.2\!\cdot\!10^{-2}$ & $1.9\!\cdot\!10^{-2}$\\
320 & 51.2 & 79.0 & 1.000 & $6.0\!\cdot\!10^{-3}$ & $9.3\!\cdot\!10^{-3}$\\
\hline
\end{tabular*}

\end{table}

\label{LastBibItem:}

\label{article_end}
\end{document}